\newcommand{\la}{\ensuremath{\rightarrow}}
\theoremstyle{plain}
\newtheorem{theorem}{Theorem}[section]
\newtheorem{lemma}[theorem]{Lemma}
\newtheorem{proposition}[theorem]{Proposition}
\newtheorem{corollary}[theorem]{Corollary}
\theoremstyle{definition}
\newtheorem{definition}[theorem]{Definition}
\newtheorem{problem}[theorem]{Problem}
\newtheorem{remark}[theorem]{Remark}
\numberwithin{equation}{theorem}
\begin{document}

\title[First order deformations of n.c. varieties.]{First order deformations of schemes with normal crossing singularities.}
\author{Nikolaos Tziolas}
\address{Department of Mathematics, University of Cyprus, P.O. Box 20537, Nicosia, 1678, Cyprus}
\email{tziolas@ucy.ac.cy}

\subjclass[2000]{Primary 14D15, 14D06, 14J45.}



\begin{abstract}
We study the sheaf $T^1(X)$ of first order deformations of a reduced scheme with normal crossing singularities. In particular, we obtain a formula for $T^1(X)$ in a suitable log resolution of $X$.
\end{abstract}

\maketitle

\section{Introduction}
The purpose of this paper is to describe the sheaf $T^1(X)$ of first order deformations of a reduced scheme $X$ with normal crossing singularities defined over an algebraically closed field $k$. 

The motivation to study deformations of schemes with normal crossing singularities comes from many different problems. 
 
Normal crossing singularities appear quite naturally in any degeneration problem. Let $f \colon \mathcal{X} \la C$ be a flat projective morphism from a variety $\mathcal{X}$ to a curve $C$. Then, according to Mumford's semistable reduction theorem~\cite{KKMS73},  after a finite base change and a birational modification the family can be brought to standard form $f^{\prime} \colon \mathcal{X}^{\prime} \la C^{\prime}$, where $\mathcal{X}^{\prime}$ is smooth and the special fibers are simple normal crossing varieties.

Schemes with normal crossing singularities appear also in the compactification of the moduli space of varieties of general type. The boundary of the compactified moduli space of surfaces of general type consists of the stable varieties. A stable variety is a proper reduced scheme $X$ such that $X$ has only semi-log-canonical singularities and $\omega_X^{[k]}$ is locally free and ample for some $k>0$~\cite{KSB88}. The simplest class of non-normal s.l.c. singularities are the normal crossing singularities. Therefore it is interesting to know which stable varieties are smoothable and which are not. In order to do this it is essential to understand their first order deformations.

Another area where normal crossing singularities play an important role is the study of Mori fiber spaces in higher dimensional birational geometry. It is well known that the outcome of the minimal model program starting with a smooth $n$-dimensional projective variety $X$, is a $\mathbb{Q}$-factorial terminal projective variety $Y$ such that either $K_Y$ is nef, or $Y$ has a Mori fiber space structure. This means that there is a projective morphism $f \colon Y \la Z$ such that $-K_Y$ $f$-ample, $Z$ is normal and $\dim Z \leq \dim X -1$. Suppose that the second case happens and $\dim Z=1$. Let $z \in Z$ and $Y_z= f^{-1}(z)$. Then $Y_z$ is a Fano variety of dimension $n-1$ and $Y$ is a smoothing of $Y_z$. The singularities of the special fibers are difficult to describe but normal crossing singularities naturally occur and are the simplest possible non-normal singularities.

For the reasons explained previously, it is of interest to study the deformation spaces of varieties with normal crossing singularities (and in particular Fano and stable varieties). For this it is essential to study their first order deformations. 

Let $X$ be a reduced scheme with normal crossing singularities defined over a field $k$. Then $T^1(X)$ is an invertible sheaf on the singular locus $D$ of $X$. If $X$ has points of multiplicity at most 2 then $D$ is smooth. However, in general $D$ may not even be Cohen-Macauley. For this reason it is preferable to work in a smooth model of $D$ and in particular in a log resolution of $(X,D)$.

In section 3 we describe $T^1(X)$ in the case when $X$ has only double point singularities and in section 4 we treat the general case. The main results are the following.

\begin{theorem}
Let $X$ be a quasi-projective scheme with normal crossing singularities defined over an algebraically closed field $k$. Let $D$ be the singular locus of $X$. Then,
\begin{enumerate}
\item Suppose that $X$ has only singularities of multiplicity 2. Let $\pi \colon \bar{X} \rightarrow X$ be the normalization of $X$ and $\bar{D}=\pi^{-1}(D)$. Then
\begin{enumerate}
\item 
\[
T^1(X)= \omega_X \otimes \omega_D^{-1} \otimes \bigwedge^2\pi_{\ast}\mathcal{O}_{\bar{D}}(\bar{D})
\]
\item
\[
\pi^{\ast}T^1(X)=\mathcal{O}_{\bar{D}}(\bar{D})\otimes \varepsilon^{\ast}\mathcal{O}_{\bar{D}}(\bar{D})
\]
\end{enumerate}
where $\varepsilon$ is the unique involution of $\bar{D}$ over $D$ interchanging the fibers of $\pi$.
\item Suppose that $\dim X \leq 3$. Let $\phi \colon \tilde{X} \la X$ be a birational morphism such that
\begin{enumerate}
\item $(\tilde{X},1/2\tilde{D})$, $\tilde{D}$ are terminal;
\item $K_{\tilde{X}}+1/2\tilde{D}$ and $K_{\tilde{D}}$ are $\phi$-nef.
\end{enumerate}
where $\tilde{D} \subset \tilde{X}$ be the reduced divisorial part of $\phi^{-1}(D)$ that dominates $D$ (such spaces do exist). Then
\[
\phi^{\ast}T^1(X)=\mathcal{O}_{\tilde{D}}(\tilde{D}) \otimes \varepsilon^{\ast}\mathcal{O}_{\tilde{D}}(\tilde{D}) \otimes \mathcal{O}_{\tilde{X}}(3E)\otimes \mathcal{O}_{\tilde{D}}
\]
where $E\subset \tilde{X}$ is the reduced $\phi$-exceptional divisor that dominates the set of singular points of $X$ of multiplicity at least three and $\varepsilon$ the unique nontrivial involution of $\tilde{D}$ over the normalization $\bar{D}$ of $D$.
\end{enumerate}

\end{theorem}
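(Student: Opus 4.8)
The overall plan is to reduce the statement to an explicit computation on the standard \'etale-local models $X=\{x_1\cdots x_m=0\}\subset\mathbb{A}^{n+1}$ with $m\le n+1\le 4$, to dispose of the locus of double points by invoking part (1), and to concentrate the entire argument over the (finitely many) points of multiplicity at least $3$, which is where the divisor $E$ lives.

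First I would record the local picture. A normal crossing variety is a hypersurface \'etale locally, hence lci, so for an embedding $X\subset M$ into a smooth scheme one has $T^1(X)=\mathcal{O}_X(X)\otimes_{\mathcal{O}_X}(\mathcal{O}_X/\mathcal{J})$, where $\mathcal{J}$ is the Jacobian ideal; a direct computation with $f=x_1\cdots x_m$ shows that $\mathcal{J}$ equals the radical ideal of the singular scheme $D$, so that $T^1(X)\cong\mathcal{O}_X(X)|_D=(\omega_X\otimes\omega_M^{-1})|_D$ \'etale locally, and globally $T^1(X)$ is an invertible sheaf on $D$. On the open set $U\subset X$ of points of multiplicity $2$ this is exactly part (1): there $\bar{D}_c:=\pi^{-1}(D)$ (the conductor, $\pi$ the normalization) is an \'etale double cover of $D$ with involution $\varepsilon$, and $\omega_M^{-1}|_D=\omega_D^{-1}\otimes\bigwedge^2\pi_{\ast}\mathcal{O}_{\bar{D}_c}(\bar{D}_c)$. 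Consequently $\phi^{\ast}T^1(X)$ is essentially known over $\phi^{-1}(U)$: since $\mathcal{O}_{\phi^{-1}(D)}|_{\tilde{D}}=\mathcal{O}_{\tilde{D}}$, the extraneous $E$-part of $\phi^{-1}(D)$ drops out and, locally, $\phi^{\ast}T^1(X)|_{\tilde{D}}=\phi^{\ast}\mathcal{O}_X(X)|_{\tilde{D}}$; and over $\phi^{-1}(U)$, where $E$ is absent and $\tilde{D}$ is the conductor, part (1)(b) gives $\phi^{\ast}T^1(X)|_{\tilde{D}}=\mathcal{O}_{\tilde{D}}(\tilde{D})\otimes\varepsilon^{\ast}\mathcal{O}_{\tilde{D}}(\tilde{D})$. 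Hence $\phi^{\ast}T^1(X)|_{\tilde{D}}$ and the sheaf on the right of the stated formula agree away from $E\cap\tilde{D}$, so they differ by $\mathcal{O}_{\tilde{D}}(G)$ for a unique divisor $G$ on $\tilde{D}$ supported on $E\cap\tilde{D}$; it remains to prove $G=3(E\cap\tilde{D})$.

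For this I would first construct and pin down $\phi$. The normalization $\pi\colon\bar{X}\to X$ is smooth and $(\bar{X},\bar{D}_c)$ is snc; resolving the self-intersections of $\bar{D}_c$ on $\bar{X}$ and composing with $\pi$ produces birational morphisms to $X$ in which the reduced divisor dominating $D$ is smooth and is a double cover of the normalization $\bar{D}$ of $D$ (the two sheets over a component of $D$ being its two preimages in the conductor), with deck involution $\varepsilon$; near a point of multiplicity $m\in\{3,4\}$ this resolution is the blow-up of the deepest stratum of $\bar{D}_c$ on each branch, followed if necessary by blow-ups of further strata. Imposing that $(\tilde{X},\tfrac12\tilde{D})$ and $\tilde{D}$ be terminal and that $K_{\tilde{X}}+\tfrac12\tilde{D}$ and $K_{\tilde{D}}$ be $\phi$-nef selects, after a relative MMP in dimension at most $3$, a model with no superfluous blow-ups; this verifies the existence claim and makes the outcome, hence $G$, independent of choices. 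Note that $\tilde{X}$ still has ordinary double points, now along the smooth divisor $\tilde{D}$.

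The crux is the computation of the multiplicity of $G$ along each component of $E\cap\tilde{D}$. This multiplicity is \'etale local, so it may be computed on the standard model over a point of multiplicity $m\in\{3,4\}$, where I would trace through $\phi$ the canonical isomorphism $T^1(X)|_U\cong\mathcal{O}_X(X)|_D$ together with the factorization $\mathcal{O}_X(X)|_D=\omega_X|_D\otimes\omega_M^{-1}|_D$ and the identification $\omega_M^{-1}|_D=\omega_D^{-1}\otimes\bigwedge^2 N_{D/M}$ coming from the Poincar\'e residue, and measure the order of vanishing of this comparison along $E$. Using $\mu^{\ast}\pi^{\ast}\omega_X=\omega_{\tilde{X}}(\tilde{D}+E)$ (the coefficient of $E$ being the difference $m_E-a_E$ between the multiplicity of $E$ in $\mu^{\ast}\bar{D}_c$ and the discrepancy of the resolution $\mu$, which equals $1$ for the blow-up of a stratum), together with the behaviour of $\omega_D$ and $\bigwedge^2 N_{D/M}$, which fail to be invertible at the points of $D$ of multiplicity at least $3$ and must therefore be handled through $\bar{D}$ and the resolution rather than directly, the local contributions along $E$ add up to $3$. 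The one genuinely delicate point is keeping track of these orders of vanishing, precisely because $D$ is singular and non-Gorenstein over the multiplicity-$\ge 3$ points. The result can be cross-checked on the global model $X=\{x_0x_1x_2=0\}\subset\mathbb{P}^3$, where $\phi$ is the composite with $\pi$ of the blow-up of the triple point on each of the three copies of $\mathbb{P}^2$, the six components of $\tilde{D}$ have self-intersection $0$ and meet $E$ once, and both sides of the formula restrict to the degree-$3$ line bundle on each of them.
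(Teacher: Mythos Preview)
Your outline for part~(2) is conceptually reasonable---reduce to a local multiplicity along $E\cap\tilde D$ and compute---but it differs from the paper's route and, as written, has real gaps.

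First, a misunderstanding of the target model: you write that ``$\tilde X$ still has ordinary double points, now along the smooth divisor $\tilde D$.'' This is false. The hypothesis that $(\tilde X,\tfrac12\tilde D)$ is terminal forces $\tilde X$ to be normal; in the paper's explicit construction $\tilde X$ has only isolated $\tfrac12(1,1,1)$ quotient singularities, and $\tilde D$ lies entirely in its smooth locus. Your normalize-first construction (blow up strata of the conductor on $\bar X$) yields a \emph{smooth} variety over $X$, not one with double points; what you get is a log resolution, not yet the model in the statement.

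Second, the paper does \emph{not} argue by an \'etale-local multiplicity count. It first proves an intermediate formula on a specific log resolution $X'$ obtained by successively blowing up $X$ (not $\bar X$) along its highest-multiplicity loci and then normalizing; this gives
\[
f^{\ast}T^1(X)=\mathcal O_{D'}(D')\otimes\varepsilon^{\ast}\mathcal O_{D'}(D')\otimes\bigotimes_{s\ge3}\mathcal O_{X'}(sE_s)\otimes\mathcal O_{D'}.
\]
For $\dim X=3$ the multiplicity-$4$ locus contributes an $E_4$ with coefficient $4$, not $3$. The passage from $(X',D')$ to $(\tilde X,\tilde D)$ is then carried out by an explicit MMP: one flops the six $(-1,-1)$-curves $L_{i,j}$ lying over each multiplicity-$4$ point and then contracts the resulting disjoint $\mathbb P^2$'s (the flopped $E'_1$) to $\tfrac12(1,1,1)$ points. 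It is precisely this step that kills the $4E_1$ term, because after the flop $E_1''\cap D''=\emptyset$. Your sketch never confronts this: the assertion that ``the local contributions along $E$ add up to $3$'' is the whole content of the theorem over the multiplicity-$4$ points, and you have not computed it. A direct discrepancy bookkeeping on your normalize-first resolution would produce different coefficients along the various exceptional divisors, and you would still have to run (and track the formula through) the same flop-plus-contraction to reach the model in the statement.

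Finally, the uniqueness step---that any two models satisfying the numerical hypotheses give the same formula---is not automatic from ``MMP makes it independent of choices''; the paper argues it by observing that two such models are isomorphic in codimension~$1$, that they induce an isomorphism $\tilde D\cong\hat D$ (since both $K_{\tilde D}$ and $K_{\hat D}$ are relatively nef), and then pushing the reflexive sheaves across. Your cross-check on $\{x_0x_1x_2=0\}\subset\mathbb P^3$ is a good sanity test for $m=3$, but it does not touch the $m=4$ case, which is where the substance lies.
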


The crucial part of the proof of the first part of the theorem is to show that if $X$ has only double points, then outside a closed set of codimension at least 2, there is a formal embeding $\hat{X} \subset \mathcal{Y}$ of the completion $\hat{X}$ of $X$ along $D$ into a smooth formal scheme $\mathcal{Y}$ of algebraic dimension equal to $\dim X +1$. This is done in Theorem~\ref{embeding}. This provides a length 2 resolution of $\hat{\Omega}_X$ which simplifies significantly the calculation of $T^1(X)$.  

In order to treat the general case, Theorem~\ref{2} gives a formula for $T^1(X)$ in the log resolution $(D^{\prime},X^{\prime})$ of $(D,X)$ that is obtained by successively blowing up the singular locus of highest multiplicity of $X$. This works in all dimensions but has the disadvantage that the log resolution used is not characterized by any numerical property making it unique. In dimension at most three we obtain a unique log resolution, by running an explicit  minimal model program on $(D^{\prime},X^{\prime})$ (and hence the difficulty to extend it in higher dimensions) and in this way we prove the second part of the previous theorem.  In dimension 2, $\tilde{D} \subset \tilde{X}$ is just the minimal log resolution of $D \subset X$ and the previous result is a special case of~\cite[Theorem 3.1]{Tzi09}.

\section{Preliminaries.}
This section contains the basic definitions and some technical results that will be needed for the proof of the main theorems.

All schemes in this paper are defined over an algebraically closed field $k$.

A reduced scheme $X$ of finite type over $k$ is called a normal crossing variety of dimension $n$ if for any $P \in X$, $\hat{\mathcal{O}}_{X,P} \cong k[[x_0,\dots,x_n]]/(x_0\cdots x_r)$, for some $r=r(P)$, where $\hat{\mathcal{O}}_{X,P}$ is the completion of the local ring of $X$ at $P$ at its maximal ideal. In addition, if $X$ has smooth irreducible components then it is called a simple normal crossing variety. 

The algebraically closedness assumption is not essential and is only imposed in order to have a simple definition of normal crossing singularities. I believe that in the case of of a non-algebraically closed field $k$, a point $P \in X$ should be called a normal crossing singularity if and only if $\hat{\mathcal{O}}_{X,P} \otimes_L \bar{L} \cong L[[x_0,\dots,x_n]]/(x_0\cdots x_r)$, where $L$ is a coefficient field of $\mathcal{O}_{X,P}$ and $\bar{L}$ its algebraic closure. This way singularities like $x_0^1+x_1^2=0$ over $\mathbb{R}$ will be called normal crossing.

For any scheme $X$ we denote by $T^1(X)$ the sheaf of first order deformations of $X$~\cite{Sch68}. If $X$ is reduced then $T^1(X)=\mathcal{E}xt^1_X(\Omega_X,\mathcal{O}_X)$.

A formal scheme $\mathcal{X}=(X,\mathcal{O}_{\mathcal{X}})$ essentially of finite type over $k$ is called smooth, if and only if, for any $P\in X$, $\mathcal{O}_{\mathcal{X},P}$ is a regular local ring.

Let $\mathcal{X}$ be a smooth formal scheme of algebraic dimension $n$. Then we define the formal dualizing sheaf $\omega_{\mathcal{X}}$ of $\mathcal{X}$ to be the invertible sheaf $\wedge^n\hat{\Omega}_{\mathcal{X}}$.

Let $X$ be a scheme having a dualizing sheaf $\omega_X$ and $Y \subset X$ a closed subscheme of $X$. Let $\hat{X}$ be the formal completion of $X$ along $Y$. Then we define $\omega_{\hat{X}}=\hat{\omega}_X$. If $X$ is smooth then the two previous definitions of formal dualizing sheaf agree. 

It is easy to show, either by direct calculation or by refering to~\cite{LRT07} and~\cite{LNS05}, that the formal dualizing sheaves satisfy similar adjunction properties for embedings of formal schemes as in the case of usual schemes. For details and basic properties of formal schemes and differentials, we refer the reader to~\cite{LRT07} and~\cite{LNS05}. At this point I would like to mention that one could define a theory of dualizing sheaves for general formal schemes and prove results similar to the scheme case, but for the purpose of this paper it is not necessary to do so and we avoid the complications of the general case by treating only  the case of either a smooth formal scheme or an algebraizable one that will be used in the next section.  

The following result of Artin provides a bridge between the simple normal crossing and the normal crossing case.

\begin{theorem}[~\cite{Art69}]\label{artin}
Let $X_1$, $X_2$ be $S$-schemes of finite type, and let $x_i \in X_i$, $i=1,2$, be points. If $\hat{\mathcal{O}}_{X_1,x_1} \cong \hat{\mathcal{O}}_{X_2,x_2}$, then
$X_1$ and $X_2$ are locally isomorphic for the \'etale topology, i.e., there is a common \'etale neighborhood $(X^{\prime},x^{\prime})$ of $(X_i,x_i)$, $i=1,2$. This means there is a diagram of \'etale maps
\[
\xymatrix{
  &  X^{\prime}\ar[dl]_{f_1} \ar[dr]^{f_2} & \\
X_1  &   &  X_2
}
\]
such that $f_1(x^{\prime})=x_1$, $f_2(x^{\prime})=x_2$ and inducing an isomorphism of residue fields $k(x_1)\cong k(x^{\prime}) \cong k(x_2)$
\end{theorem}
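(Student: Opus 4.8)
The plan is to deduce this from Artin's approximation theorem. The first step is a reduction to a local, affine statement: shrinking, we may assume $S=\operatorname{Spec}\Lambda$ with $\Lambda$ Noetherian and excellent --- which holds in every case of interest, in particular when $S=\operatorname{Spec}k$ --- and $X_i=\operatorname{Spec}A_i$ with $A_i$ a finitely generated $\Lambda$-algebra and $\mathfrak{m}_i\subset A_i$ the maximal ideal of $x_i$. Let $R=(A_1)^{h}_{\mathfrak{m}_1}$ be the henselization; recall that $R$ is the filtered colimit $\varinjlim_\lambda B_\lambda$ of the coordinate rings of the pointed \'etale neighborhoods $(V_\lambda,v_\lambda)\to(X_1,x_1)$, that each $v_\lambda$ has residue field $k(x_1)$, and that $\widehat{R}=\widehat{\mathcal{O}}_{X_1,x_1}$; Artin's approximation theorem is available for $R$.

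Next I would feed the hypothesised isomorphism $\varphi\colon\widehat{\mathcal{O}}_{X_1,x_1}\xrightarrow{\ \sim\ }\widehat{\mathcal{O}}_{X_2,x_2}$ into Artin approximation. Present $A_2=\Lambda[z_1,\dots,z_n]/(g_1,\dots,g_q)$ with $\mathfrak{m}_2\supseteq(z_1,\dots,z_n)$; then a $\Lambda$-algebra homomorphism $A_2\to R$ is exactly a tuple $\zeta=(\zeta_1,\dots,\zeta_n)$ of elements of $R$ with $g_k(\zeta)=0$ for all $k$. The inverse $\varphi^{-1}\colon\widehat{\mathcal{O}}_{X_2,x_2}\xrightarrow{\ \sim\ }\widehat{\mathcal{O}}_{X_1,x_1}=\widehat{R}$ gives the solution $\widehat{\zeta}_j=\varphi^{-1}(z_j)$ of this system over $\widehat{R}$, so Artin approximation yields a solution $\zeta\in R^{\,n}$ with $\zeta_j\equiv\widehat{\zeta}_j\pmod{\mathfrak{m}^{2}}$; since $\widehat{\zeta}_j\in\mathfrak{m}_{\widehat{R}}$ we get $\zeta_j\in\mathfrak{m}_{R}$, and we obtain a local homomorphism $\beta\colon A_2\to R$.

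Now I would extract the common \'etale neighborhood from $\beta$. On complete local rings the induced map $\widehat{\beta}\colon\widehat{\mathcal{O}}_{X_2,x_2}\to\widehat{\mathcal{O}}_{X_1,x_1}$ agrees with the isomorphism $\varphi^{-1}$ modulo $\mathfrak{m}^{2}$, hence induces an isomorphism on $\mathfrak{m}/\mathfrak{m}^{2}$; since the target is complete and its associated graded ring is generated in degree one, $\widehat{\beta}$ is surjective, and composing it with $\varphi$ exhibits a surjective endomorphism of the Noetherian local ring $\widehat{\mathcal{O}}_{X_1,x_1}$, which is automatically injective --- so $\widehat{\beta}$ is an isomorphism. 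On the other hand, since $A_2$ is finitely presented over $\Lambda$, $\beta$ factors as $A_2\xrightarrow{\ \beta_\lambda\ }B_\lambda\hookrightarrow R$ for some index $\lambda$, giving a $\Lambda$-morphism $g\colon V_\lambda\to X_2$ with $g(v_\lambda)=x_2$ whose effect on complete local rings at $v_\lambda$ is, under the identification $\widehat{\mathcal{O}}_{V_\lambda,v_\lambda}=\widehat{R}$, precisely $\widehat{\beta}$. A morphism locally of finite type that induces an isomorphism on complete local rings at a point is \'etale there, so $g$ is \'etale at $v_\lambda$; replacing $V_\lambda$ by an open neighborhood $X'$ of $v_\lambda$ on which $g$ is \'etale, and setting $x'=v_\lambda$, the maps $f_1\colon X'\hookrightarrow V_\lambda\to X_1$ and $f_2=g\colon X'\to X_2$ are \'etale with $f_i(x')=x_i$, and the residue field identifications $k(x_1)\cong k(x')\cong k(x_2)$ are those coming from $\widehat{\beta}$ (and are automatic when $k$ is algebraically closed). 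This is the required diagram.

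The deep input is Artin's approximation theorem itself --- the passage from the formal solution $\varphi^{-1}$ to a genuine one over the henselization $R$ --- which I am invoking as a black box; it is part of~\cite{Art69}, and for $S=\operatorname{Spec}k$ the statement above is precisely the classical consequence of Artin approximation recorded there. Granting it, the only point requiring care is the (purely formal) encoding of ``homomorphism'' as the solvability of a polynomial system, together with the observation that one need not construct an inverse of $\beta$ by hand: it is enough that $\widehat{\beta}$ be $\mathfrak{m}$-adically close to the known isomorphism $\varphi^{-1}$, since this already forces $\widehat{\beta}$ to be an isomorphism, after which the openness of the \'etale locus finishes the argument.
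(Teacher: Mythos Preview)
The paper does not prove this statement: it is quoted verbatim as a result of Artin~\cite{Art69} and used as a black box, so there is no ``paper's proof'' to compare against. Your write-up is a correct sketch of the standard derivation from Artin's approximation theorem, and is exactly the argument one finds in Artin's original paper: encode a $\Lambda$-homomorphism $A_2\to R$ as a polynomial system, approximate the formal solution $\varphi^{-1}$ by an algebraic one over the henselization, and then observe that agreement modulo $\mathfrak m^2$ forces the induced map on completions to be an isomorphism, whence \'etaleness at the point. The one step worth a remark is your surjectivity claim for $\widehat\beta$: the assertion that the associated graded $\bigoplus\mathfrak m^n/\mathfrak m^{n+1}$ is generated in degree one holds for \emph{any} Noetherian local ring (not just regular ones), so the argument goes through as written; after that, the ``surjective endomorphism of a Noetherian ring is injective'' trick is standard. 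In short, your proposal is fine, but for the purposes of this paper the theorem is simply being cited, and no proof is expected.
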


The sheaf $T^1(X)$ is supported on the singular locus of $X$ and is therefore determined by the completion $\hat{X}$ of $X$ along any subscheme $Y$ of $X$ containing $D$. The next proposition shows how to calculate $T^1(X)$ from a suitable embedding of $\hat{X}$ in a smooth formal scheme $\mathcal{Y}$.

\begin{proposition}\label{calc-of-t1}
Let  $X$ be a scheme with normal crossing singularities and let $Z \subset X$ be a closed subscheme containing the singular locus $D$ of $X$. Let $\hat{X} \subset \mathcal{Y}$ be an embedding of the completion $\hat{X}$ of $X$ along $Z$ into a smooth formal scheme $\mathcal{Y}$ of algebraic dimension equal to $\dim X +1$. Then
\[
T^1(X)=\mathcal{H}om_{\hat{X}}(\mathcal{I}_{\hat{X}}/\mathcal{I}_{\hat{X}}^2,\mathcal{O}_{\hat{X}})\otimes \mathcal{O}_D
\]
where $\mathcal{I}_{\hat{X}}$ is the ideal sheaf of $\hat{X}$ in $\mathcal{Y}$.
\end{proposition}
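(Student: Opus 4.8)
The plan is to reduce the whole computation to the formal completion $\hat{X}$, observe that $\hat{X}$ is a regularly embedded formal hypersurface in $\mathcal{Y}$, and then read $T^1(X)$ off from the conormal sequence of that embedding; the normal crossing hypothesis enters only at the very end, to identify the relevant Jacobian ideal.

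First I would carry out the reduction to $\hat{X}$. Since $X$ is reduced, $T^1(X)=\mathcal{E}xt^1_X(\Omega_X,\mathcal{O}_X)$, and this sheaf is supported on $D\subseteq Z$. Formal completion along $Z$ is flat, hence exact, commutes with $\mathcal{E}xt$ of coherent sheaves (flat base change), and is the identity on coherent sheaves supported on $Z$; moreover $\Omega_X\otimes_{\mathcal{O}_X}\mathcal{O}_{\hat{X}}=\hat{\Omega}_X$, the sheaf of formal differentials, by the standard compatibility of differentials with completion (cf.~\cite{LRT07},~\cite{LNS05}). Therefore, as sheaves on $D$, $T^1(X)=\mathcal{E}xt^1_{\hat{X}}(\hat{\Omega}_X,\mathcal{O}_{\hat{X}})$, and it suffices to compute the latter from the embedding $\hat{X}\subset\mathcal{Y}$.

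Next I would show that $\hat{X}\hookrightarrow\mathcal{Y}$ is a regular closed embedding of codimension one. Since $X$ is reduced and excellent, $\hat{X}$ is reduced of pure dimension $\dim X$; as $\mathcal{Y}$ is smooth of dimension $\dim X+1$, at each point the ideal $\mathcal{I}_{\hat{X}}$ is a pure height‑one ideal with no embedded components in a regular local ring, hence (the ring being a UFD) principal and generated by a nonzerodivisor. Thus $\mathcal{I}_{\hat{X}}/\mathcal{I}_{\hat{X}}^2$ is invertible on $\hat{X}$, and the conormal sequence
\[
0 \to \mathcal{I}_{\hat{X}}/\mathcal{I}_{\hat{X}}^2 \xrightarrow{\ \delta\ } \hat{\Omega}_{\mathcal{Y}}\otimes\mathcal{O}_{\hat{X}} \to \hat{\Omega}_X \to 0
\]
is exact, including on the left: a local equation $f$ of $\hat{X}$ has partials generating a Jacobian ideal $\mathcal{J}\subseteq\mathcal{O}_{\hat{X}}$, and since $\hat{X}$ is generically smooth, $\mathcal{J}$ lies in no minimal prime, so it contains a nonzerodivisor, which forces $\delta$ injective. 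Applying $\mathcal{H}om_{\hat{X}}(-,\mathcal{O}_{\hat{X}})$, and using that $\hat{\Omega}_{\mathcal{Y}}\otimes\mathcal{O}_{\hat{X}}$ is locally free (so its $\mathcal{E}xt^1$ vanishes), the long exact sequence identifies $\mathcal{E}xt^1_{\hat{X}}(\hat{\Omega}_X,\mathcal{O}_{\hat{X}})$ with the cokernel of $\delta^{\vee}\colon\mathcal{H}om_{\hat{X}}(\hat{\Omega}_{\mathcal{Y}}\otimes\mathcal{O}_{\hat{X}},\mathcal{O}_{\hat{X}})\to\mathcal{H}om_{\hat{X}}(\mathcal{I}_{\hat{X}}/\mathcal{I}_{\hat{X}}^2,\mathcal{O}_{\hat{X}})$. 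Locally the image of $\delta^{\vee}$ is $\mathcal{J}\cdot\mathcal{H}om_{\hat{X}}(\mathcal{I}_{\hat{X}}/\mathcal{I}_{\hat{X}}^2,\mathcal{O}_{\hat{X}})$, so
\[
T^1(X)=\mathcal{H}om_{\hat{X}}(\mathcal{I}_{\hat{X}}/\mathcal{I}_{\hat{X}}^2,\mathcal{O}_{\hat{X}})\otimes_{\mathcal{O}_{\hat{X}}}\mathcal{O}_{\hat{X}}/\mathcal{J}.
\]

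It remains to identify $\mathcal{O}_{\hat{X}}/\mathcal{J}$ with $\mathcal{O}_D$. By Theorem~\ref{artin}, or directly from the definition of a normal crossing singularity, it suffices to treat the local model $\mathcal{O}_{\hat{X},P}=k[[x_0,\dots,x_n]]/(x_0\cdots x_r)$, where $f=x_0\cdots x_r$ and $\mathcal{J}$ is the image of the ideal generated by $f$ and the products $x_0\cdots\widehat{x_i}\cdots x_r$, $0\le i\le r$. A direct monomial computation shows that this ideal equals $\bigcap_{0\le i<j\le r}(x_i,x_j)$, the radical ideal of the reduced singular locus $D$, so $\mathcal{O}_{\hat{X}}/\mathcal{J}=\mathcal{O}_D$; substituting this into the displayed formula gives the assertion. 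I expect the substantive points to be (i) the formal‑scheme bookkeeping in the first step — flatness of completion and its compatibility with $\mathcal{E}xt$ and with differentials — which is routine but must be set up carefully via \cite{LRT07},~\cite{LNS05}, and (ii) the identification $\mathcal{J}=\mathcal{I}_D$, which, while elementary in the local model, is exactly what makes $T^1(X)$ a line bundle on the reduced divisor $D$ rather than on a thickening of it.
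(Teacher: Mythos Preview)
Your proof is correct and follows the same overall strategy as the paper: reduce to the formal completion, use the conormal exact sequence of $\hat{X}\subset\mathcal{Y}$, dualize, and read $T^1(X)$ off as a quotient of the normal line bundle $\mathcal{H}om_{\hat{X}}(\mathcal{I}_{\hat{X}}/\mathcal{I}_{\hat{X}}^2,\mathcal{O}_{\hat{X}})$. The only real difference is in the final identification. The paper invokes as a known property of normal crossing schemes that $T^1(X)$ is already an invertible sheaf on $D$; tensoring the surjection $\mathcal{H}om_{\hat{X}}(\mathcal{I}_{\hat{X}}/\mathcal{I}_{\hat{X}}^2,\mathcal{O}_{\hat{X}})\twoheadrightarrow T^1(X)$ with $\mathcal{O}_D$ then gives a surjection of line bundles on $D$, hence an isomorphism, and no further computation is needed. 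You instead identify the image of $\delta^{\vee}$ as $\mathcal{J}\cdot\mathcal{H}om_{\hat{X}}(\mathcal{I}_{\hat{X}}/\mathcal{I}_{\hat{X}}^2,\mathcal{O}_{\hat{X}})$ and check explicitly in the local model $f=x_0\cdots x_r$ that $\mathcal{J}=\mathcal{I}_D$. Your route is a bit longer but more self-contained --- it actually \emph{proves} that $T^1(X)$ is a line bundle on the reduced $D$ rather than taking it as input --- whereas the paper's shortcut is cleaner once that fact is granted.
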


\begin{proof}
By~\cite{LRT07}, there is an exact sequence
\begin{equation}\label{calc-of-t1-1}
0 \rightarrow \mathcal{I}_{\hat{X}}/\mathcal{I}_{\hat{X}}^2 \rightarrow \hat{\Omega}_{\mathcal{Y}}\otimes \mathcal{O}_{\hat{X}} \rightarrow \hat{\Omega}_{\hat{X}} \rightarrow 0 
\end{equation}
where $\mathcal{I}_{\hat{X}}$ is the ideal sheaf of $\hat{X}$ in $\mathcal{Y}$. $T^1(X)$ is a line bundle on the singular locus $D$ of $X$. Since $D \subset Y$, it follows that
\begin{equation}\label{calc-of-t1-2}
T^1(X)=\mathcal{E}xt^1_X(\Omega_X,\mathcal{O}_X)=\mathcal{E}xt^1_X(\Omega_X,\mathcal{O}_X)^{\wedge}=\mathcal{E}xt^1_{\hat{X}}(\Omega_{\hat{X}},\mathcal{O}_{\hat{X}})
\end{equation}
Since $\mathcal{Y}$ is a smooth formal scheme, it follows that $\hat{\Omega}_{\mathcal{Y}}$ is locally free of rank $\dim X+1$. Moreover, $ \mathcal{I}_{\hat{X}}/\mathcal{I}_{\hat{X}}^2 $ is invertible on $\hat{X}$. Dualizing~\ref{calc-of-t1-1} we get the exact sequence
\[
\mathcal{H}om_{\hat{X}}(\mathcal{I}_{\hat{X}}/\mathcal{I}_{\hat{X}}^2 , \mathcal{O}_{\hat{X}}) \la \mathcal{E}xt^1_{\hat{X}}(\hat{\Omega}_X,\mathcal{O}_{\hat{X}}) \la \mathcal{E}xt^1_{\hat{X}}(\Omega_{\mathcal{Y}}\otimes \mathcal{O}_{\hat{X}},\mathcal{O}_{\hat{X}})=0
\]
Taking into consideration~\ref{calc-of-t1-2} and the fact that $T^1(X)$ is invertible on $D$, it follows after restricting to $D$ that
\[
T^1(X)=\mathcal{H}om_{\hat{X}}(\mathcal{I}_{\hat{X}}/\mathcal{I}_{\hat{X}}^2,\mathcal{O}_{\hat{X}})\otimes \mathcal{O}_D
\]
as claimed.
\end{proof}

\begin{corollary}\label{0}
Let $X$ be a scheme with normal crossing singularities and let $X \subset Y$ be an embedding such that $Y$ is smooth and $\dim Y = \dim X+1$. Then
\[
T^1(X)=\mathcal{O}_Y(X)\otimes \mathcal{O}_D
\]
where $D$ is the singular locus of $X$.
\end{corollary}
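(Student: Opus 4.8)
The plan is to deduce Corollary~\ref{0} directly from Proposition~\ref{calc-of-t1} by taking the formal completion of the given embedding $X \subset Y$. First I would set $Z = X$ (or any closed subscheme containing $D$; taking $Z=X$ is simplest) and let $\hat{X}$ be the completion of $X$ along $Z$, which in this case is just $X$ itself viewed as a scheme, and let $\mathcal{Y}$ be the completion of $Y$ along $X$. Since $Y$ is smooth of dimension $\dim X + 1$, the formal scheme $\mathcal{Y}$ is a smooth formal scheme of algebraic dimension $\dim X + 1$, and we obtain an embedding $\hat{X} \subset \mathcal{Y}$ of exactly the type required by Proposition~\ref{calc-of-t1}. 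Hence
\[
T^1(X)=\mathcal{H}om_{\hat{X}}(\mathcal{I}_{\hat{X}}/\mathcal{I}_{\hat{X}}^2,\mathcal{O}_{\hat{X}})\otimes \mathcal{O}_D,
\]
where $\mathcal{I}_{\hat{X}}$ is the ideal sheaf of $\hat{X}$ in $\mathcal{Y}$.

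The second step is to identify the conormal sheaf. Since $X$ is a Cartier divisor in the smooth scheme $Y$ (it is a hypersurface, being of codimension one and $Y$ smooth hence locally factorial — or more simply, $X$ with normal crossing singularities is locally defined by a single equation $x_0\cdots x_r$), the ideal sheaf $\mathcal{I}_X$ of $X$ in $Y$ is invertible with $\mathcal{I}_X \cong \mathcal{O}_Y(-X)$, and $\mathcal{I}_X/\mathcal{I}_X^2 \cong \mathcal{O}_Y(-X)\otimes\mathcal{O}_X$. Passing to completions commutes with this computation, so $\mathcal{I}_{\hat{X}}/\mathcal{I}_{\hat{X}}^2 \cong \mathcal{O}_Y(-X)^{\wedge}\otimes\mathcal{O}_{\hat{X}}$. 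Therefore
\[
\mathcal{H}om_{\hat{X}}(\mathcal{I}_{\hat{X}}/\mathcal{I}_{\hat{X}}^2,\mathcal{O}_{\hat{X}}) \cong \mathcal{O}_Y(X)^{\wedge}\otimes\mathcal{O}_{\hat{X}},
\]
and restricting to $D$ (which is unaffected by completion, since $D$ is a closed subscheme of $X$) gives $T^1(X) = \mathcal{O}_Y(X)\otimes\mathcal{O}_D$ as claimed.

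The only genuine point requiring care — and thus the main obstacle, though it is a mild one — is justifying that $X$ really is a Cartier divisor in $Y$, i.e. that its ideal sheaf is invertible rather than merely that $X$ has codimension one. This follows because a normal crossing variety is a hypersurface singularity: étale-locally (by Theorem~\ref{artin}, or directly from the defining equation $x_0\cdots x_r = 0$ in $k[[x_0,\dots,x_n]]$), $X$ is cut out inside the smooth ambient space by a single function, and an embedding into a smooth $Y$ of dimension exactly one more than $\dim X$ forces the local equation of $X$ to generate the ideal. One should also note that the hypothesis $\dim Y = \dim X + 1$ is what makes $\hat{\Omega}_{\mathcal{Y}}$ locally free of the right rank for Proposition~\ref{calc-of-t1} to apply. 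Everything else is a formal bookkeeping of completions, which is harmless since all the sheaves in sight are supported on or computed after restriction to $D$.
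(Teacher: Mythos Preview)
Your proposal is correct and matches the paper's intent: the paper gives no explicit proof, presenting the statement as an immediate corollary of Proposition~\ref{calc-of-t1}, and your argument spells out precisely that deduction. One cosmetic remark: since here you already have an honest embedding $X\subset Y$, you could bypass the formal completion entirely and run the conormal-sequence argument from the proof of Proposition~\ref{calc-of-t1} directly on $X\subset Y$; but going through the proposition as stated is equally valid and arguably more in keeping with the corollary structure.
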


The previous discussion motivates the following problem.

\begin{problem}\label{problem}
Let $X$ be a scheme with nonisolated hypersurface singularities and let $D$ be its singular locus. Let $\hat{X}$ be the completion of $X$ along $D$.
\begin{enumerate}
\item Is there a smooth formal scheme $\mathcal{Y}$ of algebraic dimension $\dim X +1$ such that $\hat{X}$ is a closed formal subscheme of $\mathcal{Y}$?
\item More strongly, is there an embeding $X \subset Y$ such that $Y$ is either a smooth scheme, or algebraic space, and $\dim Y =\dim X+1$? 
\end{enumerate}
\end{problem}

The answer to~\ref{problem}.1 is yes if $X$ has normal crossing singularities of multiplicity 2. This is shown in the next section. However, I do not know if the formal embedding is induced by an actual embeding of $X$ as a divisor into a smooth scheme, or even algebraic space. In the category of complex analytic spaces, for any surface $X$ with with normal crossing singularities, there is an analytic neighborhood $U$ of its singular locus $D$ and an embeding $U \subset V$ of $U$ as a divisor in a smooth 3-fold $V$~\cite{Tzi09}. This suggests that an algebraic surface with normal crossing singularities may be embedable as a divisor into a smooth algebraic space, or that its Henselization along $D$ is embedable into a smooth Hensel scheme $Y$.

\section{The double points case.}

In this section we describe $T^1(X)$ in the case when $X$ has only normal crossing singularities of multiplicity 2. The next result together with Proposition~\ref{calc-of-t1} are essential in doing so.
\begin{theorem}\label{embeding}
Let $X$ be a quasi-projective scheme over a field $k$. Suppose that the singular points of $X$ are normal crossing singularities of multiplicity two and let $D$ be its singular locus. Then 
there is an open set $U\subset X$ such that $\mathrm{codim} (D-D\cap U,D)\geq 2$ and an embedding $\hat{U} \subset \mathcal{Y}$ of the completion of $U$ along $D\cap U$ into a smooth formal scheme $\mathcal{Y}$ of algebraic dimension equal to $\dim X +1$. Moreover, 
\[
\omega_{\hat{U}}=\omega_{\mathcal{Y}}\otimes \mathcal{N}_{\hat{U}/\mathcal{Y}}
\]
where $\mathcal{N}_{\hat{U}/\mathcal{Y}}=\mathcal{H}om_{\hat{U}}(\mathcal{I}_{\hat{U}}/\mathcal{I}_{\hat{U}}^2,\mathcal{O}_{\hat{U}})$, and $\mathcal{I}_{\hat{U}}$ is the ideal sheaf of $\hat{U}$ in $\mathcal{Y}$.
\end{theorem}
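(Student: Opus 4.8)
The plan is to build the formal scheme $\mathcal{Y}$ by gluing local models and then to patch these along the singular locus $D$, where the obstruction to gluing lives in a cohomology group that we can kill by shrinking $X$. Locally near a double point $P\in D$ we have $\hat{\mathcal{O}}_{X,P}\cong k[[x_0,\dots,x_n]]/(x_0x_1)$, which is visibly the completion of a hypersurface $\{x_0x_1=0\}$ inside the smooth $(n{+}1)$-dimensional scheme $\operatorname{Spec}k[[x_0,\dots,x_n]]$. By Theorem~\ref{artin} this local analytic picture is also an \'etale-local picture: every point of $D$ has an \'etale neighborhood in $X$ that embeds as a Cartier divisor in a smooth scheme of dimension $\dim X+1$. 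So the local embeddings exist; the content is in making a global (formal, along $D$) choice. First I would stratify $D$ by the two components $\bar{D}=\pi^{-1}(D)$ of the normalization meeting over it, i.e. work with the pair $(X,D)$ together with the double cover $\bar D\to D$, and observe that a local embedding $\hat X\hookrightarrow\mathcal{Y}$ is the same data as a formal smooth scheme $\mathcal{Y}$ together with an equation cutting out $\hat X$; the normal bundle $\mathcal{N}_{\hat U/\mathcal Y}$ is then forced.

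The key step is the cohomological gluing argument. Two local choices of ambient smooth formal scheme over an overlap differ by an automorphism of the pair (smooth ambient, divisor $\hat X$), and these automorphisms form a sheaf of (non-abelian, but filtered by abelian graded pieces) groups on $D$ whose relevant cohomology controls whether the local models glue. The graded pieces are coherent sheaves on $D$ — essentially built from $\pi_\ast\mathcal{O}_{\bar D}$, normal-bundle data, and derivations — so the obstruction class lives in $H^2$ or in $H^1$ of a coherent sheaf on $D$. The point of the statement is that we do not need these to vanish on all of $D$: since $D$ may fail to be Cohen--Macaulay, we instead remove from $X$ the closed subset of $D$ that carries the obstruction. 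Concretely, the locus where the local embeddings fail to patch, or where $D$ is badly singular (e.g. where the two branches of $\bar D$ are not separated nicely, or the relevant sheaves are not locally free), is a closed subset of $D$; I would show this bad locus has codimension $\geq 2$ in $D$ by checking that in codimension $\leq 1$ along $D$ the scheme $X$ looks, \'etale-locally, like (smooth curve or surface germ)$\,\times\,$(standard node in one more variable), for which the embedding is transparent. Setting $U=X\setminus(\text{bad locus})$ then gives $\operatorname{codim}(D-D\cap U,D)\geq 2$ and a genuine formal embedding $\hat U\subset\mathcal Y$.

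Finally, the adjunction formula $\omega_{\hat U}=\omega_{\mathcal Y}\otimes\mathcal N_{\hat U/\mathcal Y}$ is then immediate: on a smooth formal scheme $\mathcal Y$ of algebraic dimension $\dim X+1$, with $\hat U$ a formal Cartier divisor (since $\mathcal{I}_{\hat U}/\mathcal{I}_{\hat U}^2$ is invertible, as noted in the proof of Proposition~\ref{calc-of-t1}), the conormal sequence~\eqref{calc-of-t1-1} is a short exact sequence of locally free $\mathcal O_{\hat U}$-modules, so taking top exterior powers gives $\wedge^{n+1}(\hat\Omega_{\mathcal Y}\otimes\mathcal O_{\hat U})=(\mathcal{I}_{\hat U}/\mathcal{I}_{\hat U}^2)\otimes\wedge^{n}\hat\Omega_{\hat U}$, i.e. $\omega_{\mathcal Y}|_{\hat U}=\mathcal N_{\hat U/\mathcal Y}^{-1}\otimes\omega_{\hat U}$. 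One must check that $\hat\Omega_{\hat U}$ is locally free of the expected rank $n$ over $\hat U$ away from the bad locus — this is exactly the statement that $X$ is, up to \'etale base change, $\{x_0x_1=0\}$, whose module of differentials is locally free of rank $n$ on each component and whose $\mathcal Ext^1$ sheaf is the line bundle $T^1$ — and that the definitions of $\omega_{\hat U}$ via completion and via $\wedge^n\hat\Omega$ agree here, which is the adjunction compatibility cited from~\cite{LRT07},~\cite{LNS05}. I expect the main obstacle to be the gluing: identifying the correct sheaf of automorphisms of the local embeddings and showing its obstruction cohomology is supported in codimension $\geq 2$, rather than the (essentially formal) adjunction bookkeeping.
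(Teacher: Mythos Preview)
Your approach differs substantially from the paper's, and there are two genuine gaps.

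For the gluing, the paper uses no obstruction cohomology. It exploits quasi-projectivity to choose two general hyperplane sections $H_1,H_2$ so that $U_1=X\setminus H_1$ and $U_2=X\setminus H_2$ are \emph{affine} and together cover $D$ up to codimension $2$. Each affine $U_i$, having hypersurface singularities, embeds as a divisor in a smooth affine $V_i$. The core of the argument is then a \emph{uniqueness} lemma: for an affine scheme with double-point normal crossings, any two such ambient smooth schemes have isomorphic completions along $D$. This uses that $D$ is smooth and $I_{D,X}/I_{D,X}^2$ is locally free of rank $2$ (here multiplicity two enters), so that $B_i/Q_i^2\cong A/I^2$; one then lifts isomorphisms $B_1/Q_1^n\to B_2/Q_2^n$ inductively via the infinitesimal lifting property of the smooth $k$-algebra $B_1$. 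With only two charts there is no cocycle condition, and the gluing is immediate. Your proposal, by contrast, speaks of removing ``the closed subset of $D$ that carries the obstruction'', but a gluing obstruction is a global class in $H^1$ (or $H^2$) of a sheaf on $D$, not an object with a closed support one can excise; verifying that the \'etale-local picture is standard at every codimension-$\leq 1$ point of $D$ does nothing to kill such a global class. (Note also that in the multiplicity-two case $D$ is smooth; your remark that it may fail to be Cohen--Macaulay belongs to the higher-multiplicity situation treated later in the paper.)

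Your adjunction argument is also incorrect. You claim that $\hat\Omega_{\hat U}$ is locally free of rank $n$, so that taking top exterior powers in the conormal sequence yields the formula; but the formal scheme $\hat U$ is supported entirely on $D$, and $\Omega_X$ fails to be locally free precisely along $D$ --- indeed $T^1(X)=\mathcal{E}xt^1_X(\Omega_X,\mathcal{O}_X)$ is nonzero exactly there. The paper instead invokes the ordinary Cartier-divisor adjunction $\omega_{U_i}=\omega_{V_i}\otimes\mathcal{O}_{V_i}(U_i)|_{U_i}$, valid because $U_i$ is Gorenstein (via duality, not via differentials), then completes along $D$ and glues.
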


\begin{proof}
Since $X$ is quasi-projective, there is an open immersion $i \colon X \rightarrow Y$, where $Y$ is a projective variety. Let $A$ be a very ample divisor on $Y$ and $H_1$, $H_2$ two general members of it. 
Then $Y-H_1$ and $Y-H_2$ are affine open subets of $Y$. Hence $U_1=i^{-1}(Y-H_1)$ and $U_2=i^{-1}(Y-H_2)$ are affine open subsets of $X$. Moreover, since $H_1$ and $H_2$ are general, 
\[
\begin{array}{lcr}
\mathrm{codim}(X-U_1\cup U_2,X) \geq 2 & \text{and}  & \mathrm{codim}(D-(U_1\cup U_2)\cap D,D) \geq 2
\end{array}
\]
For the remainder of the proof we may assume that $X=U_1\cup U_2$, where $U_1$ and $U_2$ are affine with hypersurface singularities. Let $U_i \subset V_i$ be an embeding of $U_i$ into a smooth variety $V_i$ such that $\dim V_i =\dim U_i +1$, $i=1,2$. Let $\hat{U}_i$, $\hat{V}_i$ be the completions of $U_i$ and $V_i$ along $D\cap U_i$, respectively. Then $\hat{U}_i$ is a closed formal subscheme of $\hat{V}_i$, $i=1,2$. We want to glue $\hat{V}_i$ into a formal scheme $\mathcal{Y}$. This will follow from the next claim.

\textbf{Claim:} Let $X$ be an affine scheme with normal crossing singularities of multiplicity two. Let $X \subset Y_i$, $i=1,2$, two embedings of $X$ in two smooth affine schemes $Y_1$ and $Y_2$ such that $\dim Y_i=\dim X+1$, $i=1,2$. Then $\hat{Y}_1 \cong \hat{Y}_2$, where $\hat{Y}_i$ are the completions of $Y_i$ along the singular locus $D$ of $X$.

We proceed to show the claim. Suppose $X = \mathrm{Spec} A$ and $Y_i = \mathrm{Spec} B_i$. Let $I=I_{D,X}$ and $Q_i=I_{D,Y_i}$ be the ideals of $D$ in $A$ and $B_i$, $i=1,2$, respectively. A local calculation shows that $D$ is smooth and that $I/I^2$ is free of rank 2 on $\mathcal{O}_D=A/I$. Since $D$ is smooth, $Q_i/Q_i^2$ are also free of rank 2 on $\mathcal{O}_D$. Hence from the natural surjections $Q_i/Q_i^2 \rightarrow I/I^2$, it follows that $Q_i/Q_i^2=I/I^2$ and therefore  $B_i/Q_i^2 \cong A/I^2$, $i=1,2$. Next we show that there are compatible isomorphisms $\phi_n \colon B_1/Q_1^{n+1} \rightarrow B_2/Q_2^{n+1}$, for all $n \geq  0$. We do this by induction on $n$. For $n=0$, this is trivial. Assume that for all $k \leq n-1$, there are compatible isomorphisms $\theta_k \colon B_1/Q_1^{k+1} \rightarrow B_2/Q_2^{k+1}$. Consider the square zero extension
\[
0 \rightarrow Q_2/Q_2^{n+1} \rightarrow B_2/Q_2^{n+1} \rightarrow B_2/Q_2^n \rightarrow 0
\]
Let $\theta^{\prime}_{n-1}\colon B_1 \rightarrow B_2/Q_2^{n}$ be the composition of $\theta_{n-1}$ with the projection $B_1 \rightarrow B_1/Q_1^n$. Then since $B_1$ is smooth, the infinitesimal lifting property for $B_1$ gives a lifting $\theta_n^{\prime} \colon B_1 \rightarrow B_2/Q_2^{n+1}$ of $\theta^{\prime}_n$. Since the maps are compatible, it follows that $\theta^{\prime}_n(Q_1) \subset Q_2/Q_2^{n+1}$. Hence $\theta^{\prime}_n $ factors to a homomorphism $\theta_n \colon B_1/Q_1^{n+1} \rightarrow B_2/Q_2^{n+1}$, giving a commutative diagram of square zero extensions 
\[
\xymatrix{
0 \ar[r]  &  Q_1^{n}/Q_1^{n+1} \ar[r]\ar[d]^{\psi_n} & B_1/Q_1^{n+1} \ar[r]\ar[d]^{\theta_n} & B_1/Q_1^{n+1} \ar[d]^{\theta_{n-1}}\ar[r] & 0 \\
0 \ar[r] & Q_2^{n}/Q_2^{n+1} \ar[r]  & B_2/Q_2^{n+1} \ar[r] &  B_2/Q_2^{n+1} \ar[r] & 0
}
\]
Next we claim that $\theta_n$ is surjective. Then from the above commutative diagram, and since $\theta_{n-1}$ is an isomorphism, it follows that $\psi_n$ is surjective too. But $Q_1^{n}/Q_1^{n+1}$ and $Q_1^{n}/Q_1^{n+1}$ are both free of the same rank (both isomorphic to $S^n(I/I^2)$) and hence $\psi_n$ is in fact an isomorphism. Therefore $\theta_n$ is an isomorphism as well, which concludes the proof of the theorem. The fact that $\theta_n$ is surjective follows from the next lemma. Finally, the adjunction formula stated, follows by completing the usual adjunctions in the embeddings $U_i \subset V_i$ and glueing them. 
\end{proof}

\begin{lemma}
Let $f \colon A \rightarrow B$ be a ring homomorphism of Noetherian rings. Let $I \subset A$ and $J \subset B$ be ideals in $A$ and $B$ respectively, such that $f(I)\subset J$ and both $I$ and $J$ are contained in the Jacobson radicals of $A$ and $B$, respectively. Moreover, suppose that
\begin{enumerate}
\item $f$ induces isomorphisms 
\begin{gather*}
A/I \rightarrow B/J \\
I/I^2 \rightarrow J/J^2
\end{gather*}
\item $B$ is finitely generated as an $A$-module.
\end{enumerate}
Then $f$ is surjective.  
\end{lemma}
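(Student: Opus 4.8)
The plan is to prove surjectivity via a completion argument combined with Nakayama's lemma. First I would observe that the hypotheses pass to the $I$-adic and $J$-adic completions: since $f(I) \subset J$, the map $f$ induces $\hat{f} \colon \hat{A} \to \hat{B}$ on completions, and $B$ being module-finite over $A$ gives $\hat{B} = B \otimes_A \hat{A}$, so $\hat{f}$ is surjective if and only if $f$ is (here I use that $I$, $J$ lie in the Jacobson radicals, so $A \to \hat{A}$ and $B \to \hat{B}$ are faithfully flat and a finitely generated module that vanishes after $\otimes \hat{A}$ already vanishes; more directly, $\operatorname{coker}(f)$ is a finitely generated $A$-module that is $I$-adically separated, so it is zero iff its completion is). Hence it suffices to show $\hat{f}$ is surjective.

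Next I would prove that $\hat{f}$ is surjective by showing it is surjective modulo each power of the ideal and passing to the limit. Concretely, set $C = \operatorname{coker}(f)$; the goal is $C = 0$. From the isomorphism $A/I \xrightarrow{\sim} B/J$ one gets $C = JC$, i.e. $C/JC = 0$. The key step is to promote this to $C/J^{n}C$ vanishing, or rather to control the associated graded pieces. Consider the filtration of $B$ by powers of $J$ and the map of filtered rings $f \colon (A, I\text{-adic}) \to (B, J\text{-adic})$; it induces a map on associated graded rings $\operatorname{gr}_I A \to \operatorname{gr}_J B$. The degree-zero part is the isomorphism $A/I \cong B/J$ and the degree-one part is the isomorphism $I/I^2 \cong J/J^2$. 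Since $\operatorname{gr}_J B$ is generated in degree one over $\operatorname{gr}_J^0 B = B/J$ (every ideal power $J^n$ is generated by $n$-fold products of elements of $J$), and since those degree-one generators come from $I/I^2$ via $f$, the map $\operatorname{gr}_I A \to \operatorname{gr}_J B$ is surjective in every degree. Therefore $\operatorname{gr}_I(f)$ is surjective.

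From surjectivity of the associated graded map one deduces, by the standard successive-approximation argument, that $\hat{f} \colon \hat{A} \to \hat{B}$ is surjective: given $b \in \hat B$, lift its image in $J^n/J^{n+1}$ to an element of $I^n/I^{n+1}$, subtract, and iterate; the resulting series converges $J$-adically because $\hat B$ is $J$-adically complete. This gives $\hat f$ surjective, hence $f$ surjective by the first paragraph. The step I expect to need the most care is the reduction to completions — making precise that $\operatorname{coker}(f)$ is a finitely generated $A$-module annihilated by checking it is $I$-adically separated (which is where the Jacobson radical hypothesis and the Artin–Rees lemma enter) and that it is zero iff $\operatorname{gr}_I$ of it is zero. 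The associated-graded computation itself is essentially formal once it is set up, so the real content is organizing these reductions cleanly; the finite-generation hypothesis on $B$ is exactly what is needed to know $\operatorname{coker}(f)$ is finitely generated over $A$ so that Nakayama-type and separatedness arguments apply.
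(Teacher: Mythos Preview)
Your approach works but is far more elaborate than the paper's, which dispatches the lemma in two applications of Nakayama's lemma. First, surjectivity of $I/I^2 \to J/J^2$ gives $J = f(I)B + J^2$, so Nakayama (applied to the finitely generated $B$-module $J/f(I)B$, with $J$ in the Jacobson radical of $B$) yields $J = f(I)B$. Second, with $N = \operatorname{coker}(f)$, tensoring the exact sequence $A \to B \to N \to 0$ by $A/I$ and using $B/f(I)B = B/J \cong A/I$ gives $N/IN = 0$, whence $N = 0$ by Nakayama again (now for the finitely generated $A$-module $N$, with $I$ in the Jacobson radical of $A$). No completions, no associated graded.

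There is also a genuine subtlety in your reduction that you do not resolve. When you write $\hat{B} = B \otimes_A \hat{A}$, this identifies $\hat{B}$ with the $I$-adic completion of $B$ as an $A$-module; but your successive-approximation argument proves surjectivity of $\hat{A}$ onto the $J$-adic completion of $B$. A priori these differ: you only know $f(I)B \subset J$, not equality, so the two filtrations on $B$ need not define the same topology, and hence not the same completion. The fix is precisely to show $J = f(I)B$ first---but that is exactly the paper's first Nakayama step, after which one more Nakayama finishes the job and the detour through associated graded rings and completions becomes unnecessary.
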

\begin{proof}
From the hypothesis of the lemma it follows that $J=f(I)B+J^2$. Since $J$ is contained in the Jacobson radical of $B$, it follows from Nakayama's lemma that $f(I)B=J$. Let $N$ be the cokernel of $A \rightarrow B$, as $A$-modules. Then we get an exact sequence
\[
A/I \rightarrow B/f(I)B \rightarrow N/IN \rightarrow 0
\]
But $A/I \rightarrow B/f(I)B=B/J$  is an isomorphism by assumption. Hence $N=IN$ and since $B$ is finitely generated as an $A$-module, it follows from Nakayama's lemma that $N=0$, and hence $f$ is surjective.

\end{proof}

We will also need the following.

\begin{proposition}\label{involution}
Let $f \colon X \rightarrow Y$ be a finite \'etale morphism of degree 2 between normal varieties. Then there is a unique involution $\varepsilon \colon X \rightarrow X$ over $Y$ interchanging the fibers of $f$, i.e., for any $x\in X$, $\varepsilon(x)=x^{\prime}$, where $f^{-1}(f(x))=\{x,x^{\prime}\}$.
\end{proposition}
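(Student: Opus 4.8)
The plan is to build $\varepsilon$ out of the fibre product $Z=X\times_Y X$, rather than appealing to the Galois theory of degree-two covers (which would force an extra connectedness hypothesis). First I would form $Z$ with its two projections $p_1,p_2\colon Z\to X$; each is a base change of $f$, hence finite \'etale of degree $2$, and $Z$ carries the swap automorphism $\sigma\colon Z\to Z$, satisfying $p_1\sigma=p_2$, $p_2\sigma=p_1$ and $\sigma^2=\mathrm{id}$. The diagonal $\delta\colon X\to Z$ is a section of $p_1$, hence a monomorphism; as a section of the \'etale morphism $p_1$ it is an open immersion, and as a section of the finite (so separated) morphism $p_1$ it is a closed immersion. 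Thus $\delta(X)$ is open and closed in $Z$, and so is its complement $W=Z\setminus\delta(X)$; moreover $\sigma$ fixes $\delta(X)$, hence restricts to an automorphism $g=\sigma|_W$ of $W$ with $g^2=\mathrm{id}$.

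Next I would note that $p_1|_W\colon W\to X$ is finite \'etale and count geometric fibres: over any closed point $x\in X$ the fibre of $p_1$ consists of exactly two reduced points (since $k=\bar k$), one of which lies on the diagonal, so $p_1|_W$ has degree $1$ and is therefore an isomorphism (normality of $X$ gives a second route to this via Zariski's main theorem). Since $p_2|_W=(p_1|_W)\circ g$, it is an isomorphism as well. I then define
\[
\varepsilon\;=\;(p_2|_W)\circ(p_1|_W)^{-1}\;=\;(p_1|_W)\circ g\circ(p_1|_W)^{-1}\colon X\longrightarrow X ,
\]
an automorphism of $X$. From $fp_1=fp_2$ on $Z$ we get $f\varepsilon=f$, so $\varepsilon$ is a morphism over $Y$; for a closed point $x$, the point $(p_1|_W)^{-1}(x)$ is the unique point of $Z$ lying over $x$ and off the diagonal, so $\varepsilon(x)$ is exactly the second point $x'$ of $f^{-1}(f(x))=\{x,x'\}$, i.e.\ $\varepsilon$ interchanges the fibres of $f$. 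Finally $\varepsilon^2=(p_1|_W)\circ g^2\circ(p_1|_W)^{-1}=\mathrm{id}$.

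For uniqueness I would argue with graphs. Let $\varepsilon'\colon X\to X$ be \emph{any} morphism over $Y$ that interchanges the fibres of $f$. Then $f\varepsilon'=f$ puts its graph $\Gamma_{\varepsilon'}\subseteq X\times X$ inside $Z$; it is closed in $Z$ because $X$ is separated, and it is disjoint from $\delta(X)$ because otherwise the nonempty closed set $\Gamma_{\varepsilon'}\cap\delta(X)$ would contain a closed point of $Z$ (closed points being dense in any subscheme of finite type over $k$), amounting to a closed point $x\in X$ with $\varepsilon'(x)=x$, contrary to the fibre-interchanging property. Hence $\Gamma_{\varepsilon'}\subseteq W$; since both $\Gamma_{\varepsilon'}\to X$ and $W\to X$ induced by $p_1$ are isomorphisms, the closed immersion $\Gamma_{\varepsilon'}\hookrightarrow W$ must be an isomorphism, so $\Gamma_{\varepsilon'}=W=\Gamma_{\varepsilon}$, i.e.\ $\varepsilon'=\varepsilon$. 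This proves that $\varepsilon$ is the unique morphism over $Y$ interchanging fibres, and in particular the unique such involution. I expect the only points needing care to be the two structural facts about $Z$ isolated above --- that $\delta(X)$ is open and closed, and that the degree-one cover $W\to X$ is an isomorphism --- both routine once one uses that $f$ is finite \'etale of degree $2$; the normality hypothesis is essentially a convenience here (it supplies the alternative argument for $W\cong X$).
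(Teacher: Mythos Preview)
Your proof is correct, and it takes a genuinely different route from the paper's. The paper first disposes of the reducible case by hand, then in the irreducible case works at the level of function fields: it takes the nontrivial Galois automorphism $\sigma$ of $K(X)/K(Y)$ and uses normality of $X$ (via integral closure) to show that $\sigma$ carries each affine coordinate ring into itself, hence globalizes to a scheme automorphism; the fibre-interchanging property is then deduced from Milne's rigidity lemma for \'etale morphisms. Your argument instead decomposes the fibre product $Z=X\times_Y X$ as $\delta(X)\sqcup W$ and reads off $\varepsilon$ from the two projections of $W$; uniqueness comes from identifying any competing graph with $W$. Your approach is more uniform (no reducible/irreducible split), makes the fibre-interchanging property immediate rather than an appeal to an external lemma, and shows that normality is not really needed---the essential input is that a section of a finite \'etale map is open and closed and that a finite \'etale map of degree $1$ is an isomorphism. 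The paper's argument, by contrast, genuinely consumes normality in the passage from $K(X)$ to $X$.
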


\begin{proof}

If $X$ is reducible then the claim is trivial. Hence we may assume that $X$, and hence $Y$, are irreducible. Let $\sigma \in \mathrm{Aut}_{K(Y)}(K(X))$ be the unique automorphism of order 2 of $K(X)$ over $K(Y)$. Let $U=\mathrm{Spec} A$ be an affine open subset of $Y$, $V=f^{-1}(U)=\mathrm{Spec} B$ and $f \colon A \rightarrow B$ the induced ring homomorphism. We will show that $\sigma \colon K(B) \rightarrow K(B)$ is induced by an $A$-homomorphism $\sigma \colon B \rightarrow B$. For this it suffices to show that $\sigma(B) \subset B$. Let $b \in B$. Since $B$ is integral over $A$, there is a relation
\[
b^n+a_{n-1}b^{n-1} + \cdots +a_1b+a_0 =0
\]
where $a_i \in A$, $0 \leq i \leq n-1$. Hence 
\[
\sigma(b)^n+a_{n-1}\sigma(b)^{n-1} + \cdots +a_1\sigma(b)+a_0 =0
\]
Therefore $\sigma(b)$ is integral over $B$. But since $B$ is normal, it follows that $b \in B$. Hence $\sigma(B) \subset B$, as claimed. Therefore $\sigma$ induces an involution $\varepsilon_V \colon V \rightarrow V$, over $U$. By the uniqueness of $\sigma$, these glue to an involution $\varepsilon \colon X \rightarrow X$ over $Y$. It remains to show that $\varepsilon$ interchanges the fibers. This follows from the following.
\begin{lemma}[~\cite{Mi80} Lemma 3.13]
Let $f, g \colon Y^{\prime} \rightarrow Y$ be $X$-morphisms, where $Y^{\prime}$ is a connected $X$-scheme and $Y$ is \'etale and separated over $X$. If there exists a point $y^{\prime} \in Y^{\prime}$ such that $f(y^{\prime})=g(y^{\prime})=y$ and the maps $k(y) \rightarrow k(y^{\prime})$ induced by $f$ and $g$ coincide, then $f=g$.
\end{lemma}
\end{proof}

\begin{theorem}\label{1}
Let $X$ be a quasi-projective scheme with only double point normal crossing singularities. Let $D\subset X$ be its singular part, $\pi \colon \tilde{X} \la X$ its normalization and $\tilde{D}=\pi^{-1}(D)$. Then, $\tilde{X}$, $\tilde{D}$ and $D$ are smooth, $\pi \colon \tilde{D} \la D$ is \'etale of degree 2, $T^1(X)$ is a line bundle on $D$ and
\begin{enumerate}
\item 
\[
T^1(X)= \omega_X \otimes \omega_D^{-1} \otimes \bigwedge^2\pi_{\ast}\mathcal{O}_{\tilde{D}}(\tilde{D})
\]
\item
\[
\pi^{\ast}T^1(X)=\mathcal{O}_{\tilde{D}}(\tilde{D})\otimes \varepsilon^{\ast}\mathcal{O}_{\tilde{D}}(\tilde{D})
\]
\end{enumerate}
where $\varepsilon$ is the unique involution of $\tilde{D}$ over $D$ interchanging the fibers of $\pi$. Moreover, $L=\omega_X \otimes \omega_D^{-1}$ is an invertible 2-torsion sheaf on $D$, i.e., $L^{\otimes 2}\cong \mathcal{O}_D$.
\end{theorem}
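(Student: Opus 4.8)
The plan is to deduce both formulas from the formal embedding of Theorem~\ref{embeding} together with the formula of Proposition~\ref{calc-of-t1}, so that everything reduces to a single computation of a normal bundle on the double cover $\tilde D\la D$. First I would record the local statements: by Theorem~\ref{artin}, $X$ is \'etale-locally the standard model $\mathrm{Spec}\,k[x_0,\dots,x_n]/(x_0x_1)$, on which $D=V(x_0,x_1)$ is smooth, $\tilde X=V(x_0)\sqcup V(x_1)$ is smooth, $\tilde D$ is the disjoint union of two copies of $D$, each a smooth Cartier divisor in $\tilde X$, and $\pi|_{\tilde D}$ is the fold map $D\sqcup D\la D$; being \'etale-local, these hold globally, $\pi|_{\tilde D}$ is finite \'etale of degree $2$, Corollary~\ref{0} applied to $V(x_0x_1)\subset\mathbb A^{n+1}$ shows $T^1(X)$ is invertible on $D$, and $\varepsilon$ is supplied by Proposition~\ref{involution}. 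Also $\omega_X$ is invertible, since $X$ is quasi-projective and Gorenstein.

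Next I would choose $U\subset X$ and $\hat U\subset\mathcal Y$ as in Theorem~\ref{embeding}, so that $\mathrm{codim}(D\setminus D\cap U,\,D)\geq2$ and $\omega_{\hat U}=\omega_{\mathcal Y}\otimes\mathcal N_{\hat U/\mathcal Y}$. Proposition~\ref{calc-of-t1} then gives $T^1(X)\otimes\mathcal O_{D\cap U}=\mathcal N_{\hat U/\mathcal Y}\otimes\mathcal O_D=\omega_X\otimes\omega_{\mathcal Y}^{-1}\otimes\mathcal O_{D\cap U}$. Since $D$ is a smooth scheme regularly embedded in codimension $2$ in the smooth formal scheme $\mathcal Y$ (locally cut out by $x_0,x_1$), taking top exterior powers in the conormal sequence $0\to\mathcal I_{D/\mathcal Y}/\mathcal I_{D/\mathcal Y}^2\to\hat\Omega_{\mathcal Y}\otimes\mathcal O_D\to\Omega_D\to0$ of Section~2 yields $\omega_{\mathcal Y}\otimes\mathcal O_D=\omega_D\otimes(\bigwedge^2 N_{D/\mathcal Y})^{-1}$, whence on $D\cap U$
\[
T^1(X)=\omega_X\otimes\omega_D^{-1}\otimes\textstyle\bigwedge^2 N_{D/\mathcal Y}.
\]
As all sheaves occurring in (1) and (2) are invertible on the smooth schemes $D$ and $\tilde D$ and the bad loci have codimension $\geq2$, it is enough to prove (1) and (2) over $U$.

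The heart of the argument is the canonical isomorphism $N_{D/\mathcal Y}\cong\pi_\ast\mathcal O_{\tilde D}(\tilde D)$ on $D\cap U$. The normalization map $\hat{\tilde X}\la\hat U\hookrightarrow\mathcal Y$ is \'etale-locally a closed immersion of codimension $1$, being the disjoint union of the two smooth branches of $\hat U$, so it is an unramified lci morphism with a normal line bundle $N_{\hat{\tilde X}/\mathcal Y}$; and since $\tilde D\hookrightarrow\hat{\tilde X}\la\mathcal Y$ equals $\tilde D\xrightarrow{\pi}D\hookrightarrow\mathcal Y$ with $\pi$ \'etale, the conormal sequence of the triple reads
\[
0\la\mathcal O_{\tilde D}(\tilde D)=N_{\tilde D/\hat{\tilde X}}\la\pi^{\ast}N_{D/\mathcal Y}=N_{\tilde D/\mathcal Y}\la N_{\hat{\tilde X}/\mathcal Y}\otimes\mathcal O_{\tilde D}\la0 .
\]
Dualizing the left inclusion and using the $\pi_\ast$–$\pi^{\ast}$ adjunction produces a canonical map $N_{D/\mathcal Y}^{\vee}\la\pi_\ast\mathcal O_{\tilde D}(-\tilde D)$, which I would verify is an isomorphism in the standard model $V(x_0x_1)\subset\mathbb A^{n+1}$ (there the branches contribute the $\bar x_0$- and $\bar x_1$-directions and the map is the obvious transposition), available \'etale-locally by Theorem~\ref{artin} and the uniqueness assertion proved inside Theorem~\ref{embeding}. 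Since $\pi$ is \'etale, $(\pi_\ast\mathcal G)^{\vee}=\pi_\ast\mathcal G^{\vee}$, so dualizing back gives $N_{D/\mathcal Y}\cong\pi_\ast\mathcal O_{\tilde D}(\tilde D)$, hence $\bigwedge^2 N_{D/\mathcal Y}\cong\bigwedge^2\pi_\ast\mathcal O_{\tilde D}(\tilde D)$, and with the displayed formula and extension over the codimension-$2$ complement this proves (1). For (2) I would apply $\pi^{\ast}$: flat base change along the Galois cover $\pi$ (so $\tilde D\times_D\tilde D=\tilde D\sqcup\tilde D$, the diagonal and the graph of $\varepsilon$) gives $\pi^{\ast}\pi_\ast\mathcal F\cong\mathcal F\oplus\varepsilon^{\ast}\mathcal F$, hence $\pi^{\ast}\bigwedge^2\pi_\ast\mathcal O_{\tilde D}(\tilde D)\cong\mathcal O_{\tilde D}(\tilde D)\otimes\varepsilon^{\ast}\mathcal O_{\tilde D}(\tilde D)$, while adjunction on the smooth $\tilde X$, the standard identity $\pi^{\ast}\omega_X\cong\omega_{\tilde X}(\tilde D)$ for the normalization of a normal crossing variety, and the \'etaleness of $\pi|_{\tilde D}$ give $\pi^{\ast}L\cong\mathcal O_{\tilde D}$ for $L=\omega_X\otimes\omega_D^{-1}$; the first two then prove (2), and $L^{\otimes2}=\mathrm{Nm}(\pi^{\ast}L)=\mathcal O_D$ shows $L$ is $2$-torsion.

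I expect the main obstacle to be the identification $N_{D/\mathcal Y}\cong\pi_\ast\mathcal O_{\tilde D}(\tilde D)$. The subtlety is that the two local branches of $\hat U$ along $D$, which split $N_{D/\mathcal Y}$ into two line bundles locally, are permuted by the monodromy of $\pi$, so there is no global splitting; the exact sequences of normal bundles pin the line bundles involved down only up to a $2$-torsion ambiguity, which is exactly why the isomorphism must be forced by an explicit computation in the standard \'etale-local model. One must also be careful that $\hat{\tilde X}\la\mathcal Y$ is only \'etale-locally a closed immersion (it is $2$-to-$1$ over $D$), so one works with the normal bundle of an unramified lci morphism rather than a genuine conormal ideal, and that the formal adjunctions used for $D\subset\mathcal Y$ and $\hat U\subset\mathcal Y$ behave as in the scheme case, which is granted in Section~2.
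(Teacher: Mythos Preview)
Your argument is correct and follows the same overall architecture as the paper: reduce to an open set of codimension $\geq 2$ via Theorem~\ref{embeding}, use Proposition~\ref{calc-of-t1} plus adjunction for $D\subset\mathcal Y$ to get $T^1(X)=\omega_X\otimes\omega_D^{-1}\otimes\bigwedge^2 N_{D/\mathcal Y}$, identify $N_{D/\mathcal Y}$ with $\pi_\ast\mathcal O_{\tilde D}(\tilde D)$, and then deduce (2) and the $2$-torsion statement from (1).

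The one place where you diverge meaningfully is the identification $N_{D/\mathcal Y}\cong\pi_\ast\mathcal O_{\tilde D}(\tilde D)$. You construct a canonical map using the normal bundle exact sequence of the unramified lci morphism $\hat{\tilde X}\to\mathcal Y$ and then verify it is an isomorphism \'etale-locally. This works, but the paper's route is shorter: the natural surjection $I_{D,\mathcal Y}/I_{D,\mathcal Y}^2\to I_{D,X}/I_{D,X}^2$ is a map of locally free $\mathcal O_D$-modules of the \emph{same} rank~$2$, hence an isomorphism, so the conormal of $D$ in $\mathcal Y$ coincides with the intrinsic conormal of $D$ in $X$; the latter is identified with $\pi_\ast\mathcal O_{\tilde D}(-\tilde D)$ by~\cite{Re94}. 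This avoids having to make sense of $N_{\hat{\tilde X}/\mathcal Y}$ for a non-embedding and eliminates the \'etale-local check you flag as the main obstacle. Conversely, for part~(2) and the $2$-torsion claim your use of the Galois base change $\pi^\ast\pi_\ast\mathcal F\cong\mathcal F\oplus\varepsilon^\ast\mathcal F$ and of the norm map is cleaner than the paper's explicit computation in \'etale charts followed by gluing; the paper's argument for $L^{\otimes 2}\cong\mathcal O_D$ goes through $L\otimes\pi_\ast\mathcal O_{\tilde D}\cong\pi_\ast\mathcal O_{\tilde D}$ and $\bigwedge^2$, which amounts to the same thing.
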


\begin{proof}
By Theorem~\ref{embeding} there an open subset $U \subset X$ such that $\mathrm{codim}(D-D\cap U,D) \geq 2$ and a formal embeding $\hat{U} \subset \mathcal{Y}$, where $\mathcal{Y}$ is a smooth formal scheme of algebraic dimension equal to $\dim X +1$. Since $X$ has only double point normal crossing singularities, $D$ is smooth. Moreover, $T^1(X)$ is a line bundle on $D$ and hence it is determined by its restriction to any open set of codimension $\geq 2$. Therefore by Theorem~\ref{embeding}  we may assume that there is an embeding $\hat{X} \subset \mathcal{Y}$. Then by Proposition~\ref{calc-of-t1} and standard adjunctions, it follows that
\begin{gather}\label{1.1}
T^1(X)=\mathcal{H}om_{\hat{X}}(\mathcal{I}_{\hat{X}}/\mathcal{I}_{\hat{X}}^2,\mathcal{O}_{\hat{X}})\otimes \mathcal{O}_D=\omega_{\mathcal{Y}}^{-1}\otimes \omega_{\hat{X}}=
\omega^{-1}_D \otimes \omega_{\hat{X}} \otimes \bigwedge^2 (I_{D, \mathcal{Y}}/I_{D,\mathcal{Y}}^2)
\end{gather} 
But since the completion of $X$ is along $D$, it follows that $\omega_{\hat{X}} \otimes \mathcal{O}_D=\omega_X \otimes \mathcal{O}_D$. Moreover, a straightforward local calculation shows that $I_{D,X}/I_{D,X}^2$ is locally free of rank 2 on $D$. Hence from the natural surjection $I_{D, \mathcal{Y}}/I_{D,\mathcal{Y}}^2 \rightarrow I_{D,X}/I_{D,X}^2$, it follows that 
\[
I_{D, \mathcal{Y}}/I_{D,\mathcal{Y}}^2 \cong I_{D,X}/I_{D,X}^2
\]
But $I_{D,X}/I_{D,X}^2 = \pi_{\ast} \mathcal{O}_{\tilde{D}}(\tilde{D})$~\cite{Re94}. Hence from~(\ref{1.1}) we get that
\[
T^1(X) \cong \omega_X \otimes \omega_D^{-1} \otimes \bigwedge^2\pi_{\ast}\mathcal{O}_{\tilde{D}}(\tilde{D})
\]
as claimed. Let $L=\omega_X \otimes \omega_D^{-1}$. Then by subadjunction we get that $\pi^{\ast}\omega_X =\omega_{\tilde{X}} \otimes \mathcal{O}_{\tilde{X}} (\tilde{D})$. Then, since $\tilde{D} \rightarrow D$ is \'etale, 
\[
\pi^{\ast}L=\pi^{\ast}(\omega_X \otimes \omega_D^{-1})=\omega_{\tilde{X}}\otimes \mathcal{O}_{\tilde{X}}(\tilde{D})\otimes \omega_{\tilde{D}}^{-1}=\omega_{\tilde{D}}\otimes \omega_{\tilde{D}}^{-1}=\mathcal{O}_{\tilde{D}}
\]
Hence $\pi^{\ast}L \cong \mathcal{O}_{\tilde{D}}$. Hence $L \otimes \pi_{\ast}\mathcal{O}_{\tilde{D}} \cong \pi_{\ast}\mathcal{O}_{\tilde{D}}$ and therefore $L^{\otimes 2} \otimes \wedge^2 \pi_{\ast}\mathcal{O}_{\tilde{D}} \cong \wedge^2\pi_{\ast}\mathcal{O}_{\tilde{D}}$. Since $\wedge^2 \pi_{\ast}\mathcal{O}_{\tilde{D}}$ is invertible, this implies that $L^{\otimes 2} \cong \mathcal{O}_D$, as claimed.

It remains to show the second part of the theorem. By Theorem~\ref{artin}, there is an \'etale cover $\{f_i \colon U_i \rightarrow X \}$ containing $D$ such that $U_i=U_{i,1}\cup U_{i,2}$ is simple normal crossing, with two smooth irreducible components. Consider the pullback diagram
\[
\xymatrix{
\tilde{U}_i \ar[d]_{\tilde{f}_i}\ar[r]^{\pi_i} & U_i\ar[d]^{f_i} \\
\tilde{X} \ar[r]^{\pi} & X
}
\]
where $\pi \colon \tilde{X} \rightarrow X$ is the normalization of $X$. Then $\tilde{U}_i$ is the normalization of $U_i$ and therefore $\tilde{U}_i = U_{i,1} \coprod U_{i,2}$. Moreover, $\tilde{f}_i \colon \tilde{U}_i \rightarrow \tilde{X}$ is an \'etale cover of $X$ containing $\tilde{D}$. Let $D_i=f_i^{-1}(D)$ and $\tilde{D}_i=\pi_i^{-1}(D_i)=\tilde{f}_i^{-1}(\tilde{D})$. Then since $D_i$ is the singular locus of $U_i$, it follows that $D_i= U_{i,1}\cap U_{i,2}$. Hence $\tilde{D}_i = D_i \coprod D_i$. Moreover, the involution $\varepsilon$ of $\tilde{D}$ over $D$ lifts to an involution $\varepsilon_i$ of $\tilde{D}_i$ over $D_i$, interchanging the two irreducible components. Now by flat base change it follows that
\begin{equation}\label{1.2}
f_i^{\ast}\bigwedge^2 \pi_{\ast} \mathcal{O}_{\tilde{D}}(\tilde{D})=\bigwedge^2(\pi_i)_{\ast} \mathcal{O}_{\tilde{D}_i}(\tilde{D}_i)
\end{equation}
Now $(\pi_i)_{\ast} \mathcal{O}_{\tilde{D}_i}(\tilde{D}_i)=\mathcal{N}_{D_i/U_{i,1}} \oplus \mathcal{N}_{D_i/U_{i,2}}$ and hence 
\[
\bigwedge^2 (\pi_i)_{\ast} \mathcal{O}_{\tilde{D}_i}(\tilde{D}_i)=\mathcal{N}_{D_i/U_{i,1}} \otimes \mathcal{N}_{D_i/U_{i,2}}
\]
Then 
\[
(\pi_i)^{\ast} \bigwedge^2 (\pi_i)_{\ast} \mathcal{O}_{\tilde{D}_i}(\tilde{D}_i)=(\pi_i)^{\ast} (\mathcal{N}_{D_i/U_{i,1}} \otimes \mathcal{N}_{D_i/U_{i,2}})=\mathcal{O}_{\tilde{D}_i}(\tilde{D}_i) \otimes 
\varepsilon_i^{\ast} \mathcal{O}_{\tilde{D}_i}(\tilde{D}_i)
\]
and from~(\ref{1.2}) it follows that
\[
\tilde{f}_i^{\ast}\left(\pi^{\ast} \bigwedge^2 \pi_{\ast}\mathcal{O}_{\tilde{D}}(\tilde{D})\right) = \mathcal{O}_{\tilde{D}_i}(\tilde{D}_i) \otimes 
\varepsilon_i^{\ast} \mathcal{O}_{\tilde{D}_i}(\tilde{D}_i)=\tilde{f}_i^{\ast} 
(\mathcal{O}_{\tilde{D}}(\tilde{D}) \otimes \varepsilon^{\ast} \mathcal{O}_{\tilde{D}}(\tilde{D}))
\]
Now by glueing the above sheaves in the \'etale topology we get that
\[
\pi^{\ast} \bigwedge^2 \pi_{\ast}\mathcal{O}_{\tilde{D}}(\tilde{D})=\mathcal{O}_{\tilde{D}}(\tilde{D}) \otimes \varepsilon^{\ast} \mathcal{O}_{\tilde{D}}(\tilde{D})
\]
The above formula together with part 1. gives the claimed formula.
\end{proof}

\begin{corollary}\label{dp-snc}
Let $X=\cup_{i=1}^N X_i $ be simple normal crossing with only double point singularities. Then 
\[
T^1(X)=\mathcal{N}_{D_1/X_1}\otimes \cdots \otimes \mathcal{N}_{D_N/X_N}
\]
where $D_i = X_i \cap D$.
\end{corollary}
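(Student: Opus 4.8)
The plan is to reduce everything to Theorem~\ref{1} by decomposing the singular locus into its open-and-closed pieces and checking the claimed identity on each of them. First I would fix notation. Since $X$ is simple normal crossing with only double points, for $i\neq j$ the intersection $D_{ij}=X_i\cap X_j$ is a smooth divisor in both $X_i$ and $X_j$ (possibly empty or disconnected), all triple and higher intersections are empty, so the $D_{ij}$ are pairwise disjoint and
\[
D=\coprod_{i<j}D_{ij},\qquad \pi\colon \tilde{X}=\coprod_{i=1}^N X_i\la X,
\]
whence $\tilde{D}=\pi^{-1}(D)=\coprod_i D_i$ with $D_i=X_i\cap D=\coprod_{j\neq i}D_{ij}$, and each $D_i$ is open and closed in $D$. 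Over the piece $D_{ij}\subset D$ the double cover $\pi\colon\tilde{D}\la D$ consists precisely of the two disjoint copies $D_{ij}\subset D_i$ and $D_{ij}\subset D_j$, interchanged by the involution $\varepsilon$ of Theorem~\ref{1}, which satisfies $\pi\circ\varepsilon=\pi$. Finally, by the identification $I_{D,X}/I_{D,X}^2=\pi_\ast\mathcal{O}_{\tilde{D}}(\tilde{D})$ used in the proof of Theorem~\ref{1}, the sheaf $\mathcal{O}_{\tilde{D}}(\tilde{D})$ restricts on $D_{ij}\subset D_i$ to the normal bundle $\mathcal{N}_{D_{ij}/X_i}$.

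With this in place, I would restrict the second formula of Theorem~\ref{1},
\[
\pi^\ast T^1(X)=\mathcal{O}_{\tilde{D}}(\tilde{D})\otimes\varepsilon^\ast\mathcal{O}_{\tilde{D}}(\tilde{D}),
\]
to the clopen subscheme $D_{ij}\subset D_i$ of $\tilde{D}$. Since $\pi$ maps this subscheme isomorphically onto $D_{ij}\subset D$, the left-hand side becomes $T^1(X)|_{D_{ij}}$. The first factor on the right restricts to $\mathcal{N}_{D_{ij}/X_i}$; and because $\varepsilon$ carries $D_{ij}\subset D_i$ isomorphically onto $D_{ij}\subset D_j$ compatibly with $\pi$ (using $\pi\circ\varepsilon=\pi$), the second factor restricts, under the same identification via $\pi$, to $\mathcal{N}_{D_{ij}/X_j}$. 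Hence $T^1(X)|_{D_{ij}}=\mathcal{N}_{D_{ij}/X_i}\otimes\mathcal{N}_{D_{ij}/X_j}$ for every $i<j$.

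To conclude, I would note that since $D=\coprod_{i<j}D_{ij}$ is a decomposition into open-and-closed subschemes, the previous computation pins down $T^1(X)$ as a line bundle on $D$, and that the product $\bigotimes_{k=1}^N\mathcal{N}_{D_k/X_k}$ in the statement is to be read by extending each $\mathcal{N}_{D_k/X_k}$ to an invertible sheaf on $D$ that equals $\mathcal{O}_D$ on $D\setminus D_k$ (legitimate since $D_k$ is clopen in $D$) and then tensoring; equivalently, over each $D_{ij}$ one tensors exactly those factors whose support contains $D_{ij}$. With this reading $\bigl(\bigotimes_k\mathcal{N}_{D_k/X_k}\bigr)|_{D_{ij}}=\mathcal{N}_{D_{ij}/X_i}\otimes\mathcal{N}_{D_{ij}/X_j}$, matching the computation above, which proves the corollary. (Alternatively one could start from the first formula of Theorem~\ref{1}, using that $\bigwedge^2\pi_\ast\mathcal{O}_{\tilde{D}}(\tilde{D})|_{D_{ij}}=\mathcal{N}_{D_{ij}/X_i}\otimes\mathcal{N}_{D_{ij}/X_j}$ together with $\omega_X\otimes\omega_D^{-1}\cong\mathcal{O}_D$, the latter holding because $\tilde{D}\la D$ is a split double cover in the simple normal crossing case, so the $2$-torsion class $L$ of Theorem~\ref{1} is trivial.) I expect no genuine difficulty beyond Theorem~\ref{1}; the only real content, and the main bookkeeping obstacle, is making precise the meaning of the global tensor product over the clopen decomposition and correctly tracking the involution $\varepsilon$.
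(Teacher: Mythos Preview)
Your argument is correct and, like the paper, derives the corollary directly from Theorem~\ref{1}. The only difference is emphasis: the paper uses part~(1), observing that for simple normal crossings $\pi\colon\tilde D\to D$ is the trivial (split) double cover, so the $2$-torsion class $L=\omega_X\otimes\omega_D^{-1}$ is trivial and $\bigwedge^2\pi_\ast\mathcal{O}_{\tilde D}(\tilde D)$ restricts on each $D_{ij}$ to $\mathcal{N}_{D_{ij}/X_i}\otimes\mathcal{N}_{D_{ij}/X_j}$; you instead work primarily with part~(2), restricting $\pi^\ast T^1(X)$ to each clopen piece and tracking the involution $\varepsilon$, and only mention the part~(1) route as an alternative at the end. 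Both routes are equally short once Theorem~\ref{1} is in hand; your version has the virtue of spelling out explicitly how the global tensor product $\bigotimes_k\mathcal{N}_{D_k/X_k}$ is to be interpreted over the clopen decomposition, which the paper leaves implicit.
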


\begin{proof}

This follows immediately from Theorem~\ref{1.1} and the observation that since $X$ is simple normal crossing, $\pi \colon \tilde{D} \rightarrow D$ is the trivial \'etale cover and since $\pi^{\ast}(\omega_X \otimes \omega_D)=\mathcal{O}_{\tilde{D}}$, then $\omega_X \otimes \omega_D=\mathcal{O}_{D}$. 

\end{proof}

\begin{remark}
The formula in the above Corollary is a generalization of the corresponding well known formula when $X=X_1\cup X_2$, and $X$ is a divisor into a smooth variety $Y$~\cite{Fr83}.
\end{remark}

\section{The general case.}
Let $X$ be a scheme with normal crossing singularities. One major difference between the case when $X$ has singularities of multiplicity at most 2 and the general case, is that unlike the double point case when the singular locus $D$ of $X$ is smooth, in general $D$ may not even be Cohen-Macauley. For this reason it is preferable to work with a smooth model of $D$ instead of $D$ itself. In this section we will give a formula for $T^1(X)$ in a suitable log resolution of the pair $(X,D)$, in the case when $\dim X \leq 3$. The two dimensional case is a special case of a much more general result in~\cite{Tzi09}.

\begin{definition}
Let $X$ be a scheme with normal crossing singularities. We denote by $X^{max}$ the set of points having maximal multiplicity and by $X^{\geq s}$ the set of points of multiplicity at least $s$. A straightforward local calculation shows that $X^{max}$, $X^{\geq s}$ are closed subschemes of $X$ and that $X^{max}$ is smooth.
\end{definition}

The next theorem gives a formula for $T^1(X)$ in a suitable log resolution of $(X,D)$.

\begin{theorem}\label{2}
Let $X$ be a quasi-projective scheme with normal crossing singularities and let $D$ be its singular locus. Construct inductively the sequence of morphisms
\[
X^{\prime}=X_k \stackrel{f_k}{\la} X_{k-1} \stackrel{f_{k-1}}{\la} \cdots \stackrel{f_2}{\la} X_1 \stackrel{f_1}{\la} X_0=X
\]
such that $X_{i+1} $ is the blow up of $X_i$ along $X_i^{max}$. Let $f=f_1 \circ \cdots \circ f_k$. Let $D^{\prime} \subset X^{\prime}$ be the divisorial part of $f^{-1}(D)$ that dominates $D$ and $E_s$ the reduced $f$-exceptional divisor that dominates $X^{\geq s}$, $s\geq 3$. Then $X^{\prime}$ and $D^{\prime}$ are smooth and
\[
f^{\ast}T^1(X)=\mathcal{O}_{D^{\prime}}(D^{\prime}) \otimes \varepsilon^{\ast} \mathcal{O}_{D^{\prime}}(D^{\prime}) \otimes (\otimes_{s \geq 3} \mathcal{O}_{X^{\prime}}(sE_s))
\]
where $\varepsilon$ is the unique nontrivial involution of $D^{\prime}$ over $\bar{D}$, where $\bar{D}$ is the normalization of $D$. 
\end{theorem}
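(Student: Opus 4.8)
The plan is to reduce the general multiplicity case to the double point case studied in Theorem~\ref{1} by an inductive analysis of the tower of blow-ups $X^{\prime}=X_k \to \cdots \to X_0=X$, keeping careful track of how $T^1$, the dominant divisor $D^{\prime}$, and the exceptional divisors transform at each stage. First I would establish that each $X_{i+1}\to X_i$ is a well-behaved blow-up: since $X_i^{max}$ is smooth (by the Definition preceding the theorem) and $X_i$ has normal crossing singularities, a local computation in the model $k[[x_0,\dots,x_n]]/(x_0\cdots x_r)$ shows that blowing up the stratum $\{x_0=\cdots=x_r=0\}$ again produces a scheme with normal crossing singularities, with maximal multiplicity dropped by one (or the scheme becomes smooth), and the strict transform of $D$ together with the new exceptional locus are again normal crossing divisors on the ambient picture. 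Hence after $k$ steps, where $k$ is the maximal multiplicity minus $2$, we reach $X^{\prime}$ which has at worst double points, $D^{\prime}\subset X^{\prime}$ is smooth, $\pi\colon D^{\prime}\to\bar D$ is the induced étale double cover (this identification of $D^{\prime}$ with the "normalization resolution" of $D$ is a point that needs a short argument), and Theorem~\ref{1}(2) applies to $X^{\prime}$ directly on its own singular locus.

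The heart of the matter is then the comparison formula relating $T^1$ on $X_i$ to $T^1$ on $X_{i+1}$. The clean approach is to work étale-locally via Theorem~\ref{artin}, so we may assume $X_i$ is simple normal crossing near a point of $X_i^{max}$, say with components through that point given locally by $\{x_j=0\}$, $j=0,\dots,r$. There one computes the blow-up of the ideal $(x_0,\dots,x_r)$ explicitly in each chart, identifies the strict transform $X_{i+1}$, the exceptional divisor $E$ of $f_{i+1}$, and the strict transform $D^{(i+1)}$ of $D^{(i)}$, and then compares $\mathcal{O}(D^{(i+1)})\otimes\varepsilon^{\ast}\mathcal{O}(D^{(i+1)})$ with $f_{i+1}^{\ast}$ of the corresponding object on $X_i$. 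The expectation, matching the final formula, is that at the step lowering multiplicity from $s$ to $s-1$ one picks up exactly a factor $\mathcal{O}_{X_{i+1}}(s E)$ restricted to the relevant divisor; the combinatorial coefficient $s$ should emerge from the multiplicity of the exceptional divisor in the pullback of $D$ combined with the adjunction/conormal bookkeeping. One then pushes this relation down the whole tower: since $E_s$ (the reduced $f$-exceptional divisor dominating $X^{\geq s}$) is, up to strict transforms of earlier exceptionals which do not dominate $X^{\geq s}$, the total transform of the exceptional divisor created at the step where multiplicity $s$ is resolved, the accumulated correction term is precisely $\otimes_{s\geq 3}\mathcal{O}_{X^{\prime}}(sE_s)$.

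To assemble the proof I would: (i) prove the blow-up lemma (normal crossing is preserved, $D^{\prime}$ and $E_s$ are smooth and snc, multiplicity drops) by the explicit chart computation; (ii) prove the one-step comparison $f_{i+1}^{\ast}\big(T^1(X_i)\big)$ versus $T^1(X_{i+1})$ étale-locally, using Corollary~\ref{dp-snc} and the local description of $T^1$ for snc varieties as a tensor product of conormal bundles, and then glue in the étale topology exactly as in the proof of Theorem~\ref{1}; (iii) identify $D^{\prime}$ with the resolution of $D$ carrying the canonical involution $\varepsilon$ over $\bar D$, which follows because $D^{\prime}$ is smooth and generically equal to $D$ so its normalization-double-cover structure is inherited from that of $D$; (iv) apply Theorem~\ref{1}(2) at the bottom of the tower and compose all the one-step identities, tracking exceptional divisors, to arrive at the stated formula. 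The main obstacle I anticipate is step (ii): getting the exponent $s$ exactly right (rather than $s-1$ or $2$) requires a genuinely careful local computation of how the ideal sheaf of $D$ and the conormal data behave under the blow-up of the highest-multiplicity stratum, and in particular understanding the multiplicity with which the new exceptional divisor appears in $f^{\ast}D$ versus in the strict transform $D^{\prime}$; a secondary subtlety is ensuring the étale-local isomorphisms are compatible enough to glue, which should follow from the uniqueness built into Theorem~\ref{artin} and Proposition~\ref{involution} but needs to be stated with care since $D$ itself is not normal here.
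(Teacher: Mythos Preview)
Your overall architecture---reduce to double points by iterated blow-up of $X_i^{max}$, then invoke Theorem~\ref{1}(2) at the end---is exactly the paper's. The gap is in your step~(ii), the one-step comparison of $f_{i+1}^{\ast}T^1(X_i)$ with $T^1(X_{i+1})$.

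You propose to carry this out via Corollary~\ref{dp-snc}, ``the local description of $T^1$ for snc varieties as a tensor product of conormal bundles,'' and by comparing $\mathcal{O}(D^{(i+1)})\otimes\varepsilon^{\ast}\mathcal{O}(D^{(i+1)})$ with the pullback of ``the corresponding object on $X_i$.'' But Corollary~\ref{dp-snc} is stated and proved only for \emph{double point} normal crossings. At the intermediate stages $X_i$ the multiplicity is at least~$3$, the singular locus $D_i$ is itself singular and is not a Cartier divisor in $X_i$, so neither a line bundle $\mathcal{O}_{D_i}(D_i)$ nor a conormal-tensor description of $T^1(X_i)$ is available. There is thus no ``corresponding object'' on $X_i$ to compare against, and your suggested source for the coefficient~$s$ (``the multiplicity of the exceptional divisor in the pullback of $D$'') is not well posed, since $D$ is not a divisor one can pull back.

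The paper sidesteps this by working with an ambient smooth variety rather than with intrinsic conormal data. One first observes globally that $g^{\ast}T^1(X)$ and $T^1(X_{m-2})$ differ by a line bundle of the form $\bigotimes_i\mathcal{O}(k_iB_i)$, so only the integers $k_i$ remain to be found, and those may be computed locally. Locally $X$ embeds as a divisor in a smooth $(n{+}1)$-fold $Y$, and Corollary~\ref{0} gives $T^1(X)=\mathcal{O}_Y(X)\otimes\mathcal{O}_D$ for \emph{any} multiplicity. Blowing up $X^{max}$ inside $Y$ yields a smooth $Y_1\supset X_1$, and the standard divisor pullback $f_1^{\ast}\mathcal{O}_Y(X)=\mathcal{O}_{Y_1}(X_1)\otimes\mathcal{O}_{Y_1}(mF_1)$, with $m$ the multiplicity of $X$ along the center, immediately gives $f_1^{\ast}T^1(X)=T^1(X_1)\otimes\mathcal{O}(mB_1)$. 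Iterating produces the coefficients $m,m-1,\ldots,3$, which after reindexing are your $s$'s. Your \'etale-gluing worries also evaporate in this framework: the global relation is already known up to the $k_i$, and those are determined by a purely local computation.
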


\begin{proof}
We proceed in two steps.

\textbf{Step 1.} First we succesively blow up the locus of points of multiplicity $\geq 3$ in order to reduce the calculation to that of a normal crossing scheme with only double points. Let $m$ be the maximal multiplicity of the singularities of $X$. If $m=2$, then go to step 2. Suppose that $m\geq 3$. Then locally at a point of maximal multiplicity, $X$ is given by $x_1\cdots x_m=0 \subset \mathbb{C}^{n+1}$, and $X^{max}$ by $x_1=x_2=\cdots =x_m=0$. Let $f_1 \colon X_1 \la X$ be the blow up of $X$ along $X^{max}$, $B_1$ the $f_1$-exceptional divisor and $D_1=(f_1)_{\ast}^{-1}D$. A straightforward local calculation shows that $X_1$ has normal crossing singularities of maximal multiplicity $m_1=m-1$, $B_1$ is not contained in the singular locus of $X_1$ and that $X_1^{max} \not \subset B_1$. Repeating the previous process of blowing up the locus of highest multiplicity, we get a sequence of maps
\begin{equation}\label{th2.1}
X_{m-2}\stackrel{f_{m-2}}{\la} \cdots \stackrel{f_{2}}{\la} X_1 \stackrel{f_{1}}{\la} X
\end{equation}
where $X_{m-2}$ has only normal crossing points of multiplicity 2, and hence its singular locus $D_{m-2}$ is smooth. By a slight abuse of notation, denote by $B_i$ the birational transform of the $f_i$ exceptional divisor in $X_{m-2}$ and let $g = f_{m-2}\circ \cdots \circ f_1$. Then,
\[
g^{\ast}T^1(X)=T^1(X_{m-2}) \otimes L
\]
where $L$ is a line bundle on $D_{m-2}$ of the form $\otimes_i \mathcal{O}_{X_{m-2}}(k_iB_i) \otimes \mathcal{O}_{D_{m-1}}$. We proceed to find the exact values of the $k_i$. This is local around the singularities of $X$. So we may assume that $X$ has an embeding as a divisor in a smooth $n+1$-dimensional variety $Y$, where $n=\dim X$. Then there is a sequence of birational maps
\begin{equation}\label{th2.2}
Y_{m-2}\stackrel{f_{m-2}}{\la} \cdots \stackrel{f_{2}}{\la} Y_1 \stackrel{f_{1}}{\la} Y
\end{equation}
where $Y_i$ is obtained from $Y_{i-1}$ by blowing up the locus of highest multiplicity of $X_{i-1} \subset Y_{i-1}$. Let $F_i$ be the $f_i$-exceptional divisor. Then $X_i=(f_{i})^{-1}_{\ast}X_{i-1}$ and $B_i=F_i \cdot X_i$ and,
\begin{equation}\label{eq2.1}
f_1^{\ast} \mathcal{O}_Y(X)=\mathcal{O}_{Y_1}(X_1) \otimes \mathcal{O}_{Y_1}(mB_1)
\end{equation}
Moreover, by Proposition~\ref{0}, $T^1(X)=\mathcal{O}_Y(X)\otimes \mathcal{O}_D$ and therefore~(\ref{eq2.1}) gives that,
\[
f_1^{\ast}T^1(X)=T^1(X_1)\otimes \mathcal{O}_{Y_1}(mB_1)
\]
Continuing in a similar fashion we get that,
\begin{equation}\label{eq2.2}
g^{\ast}T^1(X)=T^1(X_{m-2})\otimes (\otimes_{i=1}^{m-2} \mathcal{O}_{X^{\prime}}((m-i+1)B_i))
\end{equation}

\textbf{Step 2.} Let $f_{m-1} \colon X_{m-1} \la X_{m-2}$ be the normalization of $X_{m-2}$, which is smooth since $X_{m-2}$ has only double point singularities. Let $D_{m-2}$ its singular locus and $D_{m-1}=f_{m-1}^{-1}(D_{m-2})$. Then by Theorem~\ref{1},
\begin{equation}\label{eq2.3}
f_{m-1}^{\ast}T^1(X_{m-2})=\mathcal{O}_{D_{m-1}}(D_{m-1}) \otimes \varepsilon^{\ast} \mathcal{O}_{D_{m-1}}(D_{m-1})
\end{equation}
where $\varepsilon$ is the unique involution of $D_{m-1}$ over $D_{m-2}$ interchanging the fibers of $f_{m-1} \colon D_{m-1} \rightarrow D_{m-2}$. Let $f=f_{m-1}\circ g$ and $B^{\prime}_i=f_{m-1}^{-1}B_i$. Then from (\ref{eq2.2}),~(\ref{eq2.3}) it follows that
\begin{equation}\label{eq2.4}
f^{\ast}T^1(X)=    \mathcal{O}_{D_{m-1}}(D_{m-1}) \otimes \varepsilon^{\ast} \mathcal{O}_{D_{m-1}}(D_{m-1}) \otimes (\otimes_{i=1}^{m-2} \mathcal{O}_{X^{\prime}}((m-i+1)B^{\prime}_i))
\end{equation}
Note that by construction, $B^{\prime}_i$ dominates the locus of points of multiplicity $\geq m-i+1$. Now setting $X^{\prime}=X_{m-1}$, $E_{m-i+1}=B_i $,~(\ref{eq2.4}) takes the form stated in the theorem.
\end{proof}

\begin{remark}
One may try to get a formula for $T^1(X)$ in the normalization $\pi \colon \tilde{X} \la X$. However, $\tilde{D}=\pi^{-1}(D)$ is singular and it is preferable to work with smooth varieties. The pair $(D^{\prime},X^{\prime})$ is a log resolution for $(D,X)$ that is obtained in a natural way by repeatedly blowing up the centers of maximal multiplicity. The disadvantage of this approach is that $(D^{\prime},X^{\prime})$ is not characterized by any numerical property that would make it unique, as for example the minimal log resolution in the case of surfaces. However, in dimension at most 3 we can get a more natural description by using the minimal model program.
\end{remark}

\begin{theorem}\label{3}
Let $X$ be a quasi-projective scheme with normal crossing singularities with $\dim X \leq 3$ and let $D$ be its singular locus. Let $\phi \colon \tilde{X} \la X$ be a birational morphism such that
\begin{enumerate}
\item $(\tilde{X},1/2\tilde{D})$, $\tilde{D}$ are terminal;
\item $K_{\tilde{X}}+1/2\tilde{D}$ and $K_{\tilde{D}}$ are $\phi$-nef.
\end{enumerate}
where $\tilde{D} \subset \tilde{X}$ be the reduced divisorial part of $\phi^{-1}(D)$ that dominates $D$ (such spaces do exist). Then
\[
\phi^{\ast}T^1(X)=\mathcal{O}_{\tilde{D}}(\tilde{D}) \otimes \varepsilon^{\ast}\mathcal{O}_{\tilde{D}}(\tilde{D}) \otimes \mathcal{O}_{\tilde{X}}(3E)\otimes \mathcal{O}_{\tilde{D}}
\]
where $E\subset \tilde{X}$ is the reduced $\phi$-exceptional divisor that dominates the set of singular points of $X$ of multiplicity at least three and $\varepsilon$ the unique nontrivial involution of $\tilde{D}$ over the normalization $\bar{D}$ of $D$.
\end{theorem}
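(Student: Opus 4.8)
The plan is to transport the formula of Theorem~\ref{2} through a minimal model program, so that the many exceptional divisors produced there collapse to the single divisor $E$ with the uniform coefficient $3$. Concretely, we start from the log resolution $(D',X')$ of Theorem~\ref{2}: $X'$ is smooth, $D'$ is smooth and $D'+\sum_{s\ge 3}E_s$ is simple normal crossing, so $(X',\frac12 D')$ and $D'$ are terminal. As $\dim X\le 3$ one may run an explicit minimal model program over $X$ — combining the relative MMP for $K_{X'}+\frac12 D'$ with the construction of the minimal resolution of the normalization $\bar D$ of $D$, both available in this range of dimensions — ending with a $\mathbb Q$-factorial $\phi\colon\tilde X\to X$ for which, with $\tilde D$ the strict transform of $D'$, the pair $(\tilde X,\frac12\tilde D)$ and $\tilde D$ are terminal (hence $\tilde D$, a surface or a curve, is smooth) and $K_{\tilde X}+\frac12\tilde D$, $K_{\tilde D}$ are $\phi$-nef. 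This establishes existence; the hypothesis $\dim X\le 3$ enters precisely here, since in higher dimension neither termination nor the explicit description of the steps of this program is available.

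\textbf{Reduction to a comparison on a common divisor.} Let $W\to X'$ and $W\to\tilde X$ be a common log resolution, chosen so that the strict transform $D_W$ of $D'$ (equivalently of $\tilde D$) carries an involution $\varepsilon_W$ restricting to the involution of $D'$ over $\bar D$ and to that of $\tilde D$ over $\bar D$; such a $W$ exists because $D'$ and $\tilde D$ are both birational to the degree-$2$ étale cover of $\bar D$ attached to the singularities of $X$ (as in the proof of Theorem~\ref{1}, via Theorem~\ref{artin} and Proposition~\ref{involution}). Write $p\colon W\to X'$ and $q\colon W\to\tilde X$, so that $f\circ p=\phi\circ q$. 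Pulling back $T^1(X)$ — a line bundle on $D$ — and applying Theorem~\ref{2} gives, in $\operatorname{Pic}(D_W)$,
\[
q^*\phi^*T^1(X)=p^*\Big(\mathcal O_{D'}(D')\otimes\varepsilon^*\mathcal O_{D'}(D')\otimes\!\!\bigotimes_{s\ge 3}\!\!\mathcal O_{X'}(sE_s)\Big)\Big|_{D_W}.
\]
Since $q|_{D_W}\colon D_W\to\tilde D$ is a proper birational morphism of smooth varieties, $q^*$ is injective on $\operatorname{Pic}(\tilde D)$; hence it suffices to show the right-hand side equals $q^*\big(\mathcal O_{\tilde D}(\tilde D)\otimes\varepsilon^*\mathcal O_{\tilde D}(\tilde D)\otimes\mathcal O_{\tilde X}(3E)\big)|_{D_W}$.

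\textbf{The numerical identity.} Using adjunction on the smooth, resp.\ $\mathbb Q$-factorial, models one writes $\mathcal O_{D'}(D')=\omega_{D'}\otimes(\omega_{X'}|_{D'})^{-1}$ and $\mathcal O_{\tilde D}(\tilde D)=\omega_{\tilde D}\otimes(\omega_{\tilde X}|_{\tilde D})^{-1}$ (interpreted in the appropriate $\mathbb Q$-divisorial sense, which is harmless as the object computed is a genuine line bundle); this turns the self-intersection terms into canonical classes. The discrepancy over $X$ of every divisor occurring on $W$ is then determined by the characterization of $\tilde X$ — that $K_{\tilde X}+\frac12\tilde D$ is $\phi$-nef and $(\tilde X,\frac12\tilde D)$ terminal — and the discrepancies over $\tilde D$ by the $\phi$-nefness of $K_{\tilde D}$. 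Together with a local computation at a point of $X^{\ge s}$ — where, as in Theorem~\ref{2}, blowing up the maximal stratum makes $\mathcal O(X)$ acquire the exceptional divisor with coefficient equal to the multiplicity, while the minimal model program contracts every exceptional divisor lying over a stratum of multiplicity $\ge 4$ — this yields the cancellation: the correction incurred by the self-intersection class $\mathcal O_D(D)$ under the program absorbs the terms $sE_s$ with $s\ge 4$ and turns the coefficient of the surviving exceptional divisor $E$ into $3$. Applying $\varepsilon_W^*$ disposes of the $\varepsilon$-conjugate terms — the divisors $E_s$ and $E$ restrict $\varepsilon_W$-invariantly to $D_W$, so they contribute nothing new there — which completes the proof; in dimension $2$ this recovers a special case of \cite[Theorem 3.1]{Tzi09}.

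The main obstacle is exactly this last step: pinning down which divisors the minimal model program contracts and with which discrepancies, and then verifying the numerical cancellation at the higher-multiplicity strata. Once Theorem~\ref{2} and the existence of the model $\phi$ are granted, everything else is essentially formal.
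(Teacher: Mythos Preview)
Your proposal sketches the right overall strategy---start from Theorem~\ref{2}, run a minimal model program, track the formula---but it leaves unexecuted the two steps that carry all the content, and the paper's own remarks show that neither is ``essentially formal''.

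\textbf{Existence of $\tilde X$.} You assert that one may run a relative MMP for $K_{X'}+\tfrac12 D'$ together with the minimal resolution of $\bar D$ to produce the model. But as the Remark following the statement points out, running an MMP simultaneously for $(X',\tfrac12 D')$ and for $D'$ cannot be done in general; the existence of $(\tilde X,\tilde D)$ is not a formal consequence of the MMP. The paper instead constructs $\tilde X$ by hand: it identifies the $(-1,-1)$-curves $L_{i,j}$ in $X'$ lying over each multiplicity-$4$ point, performs the standard Atiyah flop of each, observes that the strict transform $E_1''$ of the multiplicity-$4$ exceptional divisor becomes a disjoint union of $\mathbb P^2$'s (via the standard quadratic Cremona transformation), and then contracts each component to a $\tfrac12(1,1,1)$ point. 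Only after this explicit construction can one verify the nefness and terminality claims.

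\textbf{The numerical cancellation.} You write that discrepancy bookkeeping ``absorbs the terms $sE_s$ with $s\ge4$'', but you do not carry out any such computation, and you concede at the end that this is the main obstacle. In the paper the mechanism is entirely geometric and quite specific: after the flops one has $E_1''\cap D''=\emptyset$, so the factor $\mathcal O_{X''}(4E_1'')$ restricts trivially to $D''$ and simply disappears from the formula; the remaining $3E_2''$ pushes forward to $3E$. No discrepancy calculation of the sort you describe is performed or needed, and it is not clear that a purely numerical argument on a common roof $W$ would recover this disjointness without redoing the explicit geometry.

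In short, the paper's proof is an explicit local computation (tetrahedra, flops, Cremona) rather than an abstract MMP plus discrepancy comparison, and the explicit geometry is precisely what makes both existence and the coefficient reduction go through.
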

\begin{remark}
\begin{enumerate}
\item In the case of surfaces, $(\tilde{D},\tilde{X})$ is simply the minimal log resolution of $(D,X)$ and the theorem is a special case of~\cite{Tzi09}.
\item In the case of 3-folds,~(\ref{3}.1) implies that $\tilde{D}$ is smooth in $\tilde{X}$. However, the form written seems to be more natural to generalize in higher dimensions.
\item The proof of the theorem shows that if $\bar{D}$ is the normalization of $D$, then the induced map $\tilde{D} \la \bar{D}$ is \'etale.
\item The problems in order to obtain a similar result in all dimensions are the following. The existence of a pair $(\tilde{X},\tilde{D})$ with the properties stated in Theorem~\ref{3} is not a formal consequence of the minimal model program. One could start with a log resolution $(X^{\prime},D^{\prime})$ of $(X,D)$ and then try to run a MMP simultaneously for $(X^{\prime},1/2D^{\prime})$ and $D^{\prime}$, but this cannot be done in general. The construction of $(\tilde{X},\tilde{D})$ is explicit in the proof of Theorem~\ref{3} and in principle if one is careful enough it should be possible to generalize the argument in all dimensions.
\end{enumerate}
\end{remark}

\begin{proof}[Proof of Theorem~\ref{3}]
We only do the 3-fold case. The surface case is much simpler.

The proof consists of two steps. In the first one we will explicitly construct a pair $(\tilde{X},\tilde{D})$ with the properties of the statement and in the second part we will show that given any other pair $(X^{\prime},D^{\prime})$ having the same properties, $T^1(X)$ is given by the same formula.

\textbf{Step 1.} Since $\dim X=3$, $\mathrm{mult}_P(X) \leq 4$, for all $P\in X$. Now from the proof of Theorem~\ref{2}, there is a sequence of maps
\[
X^{\prime} \stackrel{g}{\la} X_2 \stackrel{f_2}{\la} X_1 \stackrel{f_1}{\la} X
\]
with the following properties. $X_1$ is the blow up of $X$ along the locus of points of multiplicity $4$. Then the singular points of $X_1$ have multiplicity at most $3$ and $X_2$ is the blow up of $X_1$ along the locus of points of multiplicity $3$. $X_2$ has only double points and $X^{\prime}$ is its normalization.

Let $E_1$ be the $f_1$-exceptional divisor and $D_1=(f_1)_{\ast}^{-1}D$. A straightforward local calculation shows that $D_1$ is the singular locus of $X_1$,  and over any singular point of $X$ with multiplicity $4$, $E_1 \cong (x_0x_1x_2x_3=0)\subset \mathbb{P}^3$. Moreover, $X_1^{max} \cap E_1 = \{P_1,P_2,P_3,P_4\}$, where $P_i$ are the vertices of the tetrahedron $x_0x_1x_2x_3=0$ in $\mathbb{P}^3$, and $E_1 \cap D_1 =\cup_{i,j} L_{i,j}$, where $L_{i,j}$ is the line connecting the verices $P_i$ and $P_j$.

Let $E_2$ be the $f_2$-exceptional divisor and $D_2=(f_2)_{\ast}^{-1}(D_1)$. Let $P\in X_1$ be a point such that $\mathrm{mult}_P(X_1)=3$. Then a straightforward local calculation shows that $X_2$ has only double points, $f_2^{-1}(P)\cong(x_0x_1x_2=0)\subset \mathbb{P}^2$, $D_2$ is the singular locus of $X_2$ and $D_2 \cap f_2^{-1}(P) =\{Q_1,Q_2,Q_3\}$, where $Q_i$ are the vertices of the triangle $x_0x_1x_2=0$ in $\mathbb{P}^3$.

Let $\tilde{E}_1=(f_2)_{\ast}^{-1}E_1$. Then $\tilde{E}_1=f_2^{\ast}E_1$. Moreover, $\tilde{E}_1$ is the blow up of $E_1$ along its vertices, and over any point of multiplicity $4$ of $X$, $\tilde{E}_1 \cap E_2 = \cup_{i=1}^4 f_2^{-1}(P_i)$. Moreover, from the previous discussion it follows that $f_2^{-1}(P_i)=F_{i,1}\cup F_{i,2}\cup F_{i,3}$, where $F_{i,j}$ are the edges of the triangle $(x_0x_1x_2=0) \subset \mathbb{P}^2$, and $\tilde{E}_1 \cap Z_2 = \cup_{i,j} \tilde{L}_{i,j}$, where $\tilde{L}_{i,j}$ are the birational transforms of the edges $L_{i,j}$ of $E_1$ in $E_2$.

The normalization $X^{\prime}$ of $X_2$ is the blow up of $X_2$ along $D_2$. Let $D^{\prime}=g^{-1}(D_2)$. Then $g \colon D^{\prime} \la D_2$ is \'etale of degree 2. Denote by $Z^{\prime}$ the birational transform of any divisor $Z \subset X_2$ in $X^{\prime}$. Then $E_2^{\prime}$, $E_1^{\prime}$ are the blow ups of $E_2$, $E_1$ along their edges and a straightforward local calculation shows that $E_2^{\prime}$, $E_1^{\prime}$ are smooth. Moreover, over any singular point $P\in X$ of multiplicity $4$, $E_2^{\prime}=\coprod_{i=1}^4 H_i $, where $H_i$ is isomorphic to the blow up of $\mathbb{P}^2$ along the vertices of the triangle $x_0x_1x_2=0$. In other words, $E^{\prime}_1$ is the log resolution of $E_1$ and its edges obtained by first blowing up the vertices and then the edges. Hence in each $H_i$ there is the following configuration of $f$-exceptional curves, where $f=f_1 \circ f_2 \circ g$.
\[
\xymatrix{
              & \bullet   \ar@{-}[dl]_{F_{i,3}}\ar@{-}[r]^{L_{i,1}}                  &        \bullet \ar@{-}[dr]^{F_{i,1}}             &               \\
\bullet \ar@{-}[dr]_{L_{i,3}}       &                            &                             &      \bullet \ar@{-}[dl]^{L_{i,2}}            \\
              &     \bullet \ar@{-}[r]_{F_{i,2}}                &         \bullet              &        \\
}
\]
Moreover, we have the following intersection table.
\begin{gather}\label{intersection-table}
L_{i,j}^2=F_{i,j}^2=-1 \notag \\
\cup_{i,j}L_{i,j} = H_i \cdot D^{\prime}  \\
\cup_{i,j} F_{i,j} = H_i \cdot E_2^{\prime} \notag \\
L_{i,j} \cdot E^{\prime}_2=F_{i,j}\cdot D^{\prime} =2 \notag \\
L_{i,j}\cdot E_1^{\prime}=L_{i,j} \cdot (f_2\circ g)^{\ast} E_1 = (f_2\circ g)_{\ast}(L_{i,j}) \cdot E_1 =-1 \notag
\end{gather}

\begin{gather*}
F_{i,j} \cdot E_1^{\prime} =F_{i,j} \cdot (f_2\circ g)^{\ast} E_1 = (f_2\circ g)_{\ast}(F_{i,j}) \cdot E_1=0  \\
F_{i,j} \cdot E^{\prime}_2 =-1  \\
L_{i,j} \cdot D^{\prime} = (L_{i,j})^2_{E_1^{\prime}} =-1  \\
(L_{i,j})^2_{D^{\prime}}=L_{i,j} \cdot E_1^{\prime}=-1
\end{gather*}
where by $(L_{i,j})^2_{E_1^{\prime}}$, $(L_{i,j})^1_{D^{\prime}}$ we denote the self-intersection numbers of $L_{i,j}$ in $E_1^{\prime}$ and $Z^{\prime}$, respectively.

Next we use the Minimal Model Program to obtain a 3-fold with the properties stated in the theorem.

Standard adjunction formulas (which could be calculated using a local embedding $X \subset Y$ of $X$ in a smooth 4-fold $Y$ and following the argument of the proof of Theorem~\ref{1}), give that
\[
K_{X^{\prime}}+1/2D^{\prime} =f^{\ast}K_X-E_1^{\prime}-E_2^{\prime}-1/2D^{\prime}.
\]
and therefore
\begin{gather*}
(K_{X^{\prime}}+1/2D^{\prime})\cdot L_{i,j}=-1/2 \\
K_{X^{\prime}}\cdot L_{i,j} =0
\end{gather*}
Moreover, the exact sequence
\[
0 \la \mathcal{N}_{L_{i,j}/E_1^{\prime}} \la \mathcal{N}_{L_{i,j}/X^{\prime}} \la \mathcal{O}_{X^{\prime}}(E^{\prime}_1) \otimes \mathcal{O}_{L_{i,j}} \la 0
\]
give that
\[
\mathcal{N}_{L_{i,j}/X^{\prime}} = \mathcal{O}_{\mathbb{P}^1}(-1) \oplus \mathcal{O}_{\mathbb{P}^1}(-1)
\]
Hence there is a flopping contraction $h^{\prime} \colon X^{\prime} \la S$ contracting each $L_{i,j}$ to an ordinary 3-fold double point. Let $\psi \colon  X^{\prime} \dasharrow X^{\prime\prime} $ be its flop. This is a standard flop and its construction is described in the following diagram:
\begin{equation}\label{flop}
\xymatrix{
              &             W \ar[dl]_{\psi^{\prime}}\ar[dr]^{\psi^{\prime\prime}}               &              \\
X^{\prime} \ar[dr]_{h^{\prime}} \ar@{-->}[rr]^{\psi}         &                            &     X^{\prime\prime} \ar[dl]^{h^{\prime\prime}}\\
                                         &              S                 &      \\
}
\end{equation}
Here $W$ is the blow up of $X^{\prime}$ along $L_{i,j}$. Let $B$ be the $\psi^{\prime}$-exceptional divisor. Then over a neighborhood of any of the $L_{i,j}$, $B$ is a ruled surface over $L_{i,j}$ and in fact $B \cong \mathbb{P}^1 \times \mathbb{P}^1$. Then $\psi^{\prime\prime}$ is the contraction of the other ruling and hence $X^{\prime\prime}$ is also smooth. Let $L_{i,j}^{\prime\prime}$ be the $h^{\prime\prime}$-exceptional curves and $D^{\prime\prime}$ be the birational transform of $D^{\prime}$ in $X^{\prime\prime}$. Then from diagram~(\ref{flop}) and the explicit description of $X^{\prime}$, $E_1^{\prime}$ and $D^{\prime}$ it follows that $D^{\prime}_W=(\psi^{\prime})_{\ast}^{-1} D^{\prime} \cong D^{\prime}$ and therefore
\[
\phi \colon D^{\prime} \la D^{\prime\prime}
\]
is a morphism and is in fact, from~(\ref{intersection-table}), it is the contraction of the $(-1)$-curves of $D^{\prime}$. Then $L^{\prime\prime}_{i,j} \cdot D^{\prime\prime}=1$, $K_{X^{\prime\prime}}\cdot L^{\prime\prime}_{i,j}=0$ and therefore
\[
(K_{X^{\prime\prime}}+1/2 D^{\prime\prime})=1/2
\]
However, $K_{X^{\prime\prime}}+1/2 D^{\prime\prime}$ is still not nef. Let $F^{\prime\prime}_{i,j}$ be the birational transform of $F_{i,j}$ in $X^{\prime\prime}$. Then, since
\[
(\psi^{\prime\prime})^{\ast}D^{\prime\prime}=(\psi^{\prime})_{\ast}^{-1}D^{\prime}=(\psi^{\prime})^{\ast}D^{\prime}-B
\]
it follows from~(\ref{intersection-table}) that
\[
D^{\prime\prime}\cdot F^{\prime\prime}_{i,j} = D^{\prime}\cdot F_{i,j} - B \cdot (\psi^{\prime})_{\ast}^{-1} F_{i,j} =2-2=0
\]
Moreover, $K_{X^{\prime\prime}} \cdot F^{\prime\prime}_{i,j} =K_{X^{\prime}}\cdot F_{i,j} = -1$ and hence
\[
(K_{X^{\prime\prime}}+1/2 D^{\prime\prime}) \cdot F^{\prime\prime}_{i,j} = -1
\]
Let $H^{\prime\prime}_i=\phi_{\ast}^{-1} H_i$. Then from diagram~\ref{flop} it follows that $H_i^{\prime\prime} \cong \mathbb{P}^2$. Let us elaborate more on this. As mentioned earlier in the proof, if $P \in X$ is any point of multiplicity $4$ of $X$, then $f_1^{-1}(P)$ is isomorphic to the tetrahedron $x_0x_1x_2x_3=0$ in $\mathbb{P}^3$. Let $\Delta_i$ be its faces, $i=1,2,3,4$. Then $\Delta_i \cong \mathbb{P}^2$ and the edges $L_{i,j}$ of $f_1^{-1}(P)$ on $\Delta_i$ is the triangle $x_0x_1x_2=0$. Then $H_i$ is the blow up of $\Delta_i$ along the vertices of the triangle and $F_{i,j}$ are the exceptional curves. Now $H^{\prime\prime}_{i}$ is the contraction of the $L_{i,j}$. But this is exactly the construction of the standard quadratic transformation of $\mathbb{P}^2$. hence $H_i^{\prime\prime} \cong \mathbb{P}^2$ and
\[
\Delta_i=\mathbb{P}^2 \dasharrow \mathbb{P}^2\cong H_i^{\prime\prime}
\]
is the standard quadratic transformation of $\mathbb{P}^2$.

Hence $E_1^{\prime\prime}$ is a disjoint union of projective planes. Moreover,
\[
(\psi^{\prime\prime})^{\ast}E_1^{\prime\prime}=(\psi^{\prime})^{\ast}E_1^{\prime}-B
\]
and hence
\[
F_{i,j}^{\prime\prime} \cdot E_1^{\prime\prime}=F_{i,j}\cdot E_1^{\prime}-F_{i,j}\cdot B =0-2=-2
\]
Therefore, there is a birational morphism $\alpha \colon X^{\prime\prime} \la \tilde{X} $ over $X$, contracting every irreducible component of $E_1^{\prime\prime}$ to a cyclic quotient singularity of type $\frac{1}{2}(1,1,1)$. let $\tilde{D}=\alpha_{\ast}D^{\prime\prime}$. Then $\tilde{D} \cong D^{\prime\prime}$ and from the construction it follows that it is contained in the smooth part of $\tilde{X}$. Moreover, $K_{\tilde{X}}+1/2\tilde{D}$ and $K_{\tilde{D}}$ are both nef over $X$. Hence $(\tilde{X},\tilde{D})$ satisfies the numerical properties of the theorem. It also follows from the above construction that the natural induced map $\tilde{D} \la \overline{D}$, where $\overline{D}$ is the normalization of $D$, is \'etale of degree 2.

Next we show that $T^1(X)$ is given by the formula claimed in the statement. We do it by moving around the diagram
\begin{equation}\label{construction-diagram1}
\xymatrix{
              &             W \ar[dl]_{\psi^{\prime}}\ar[dr]^{\psi^{\prime\prime}}               &              \\
X^{\prime} \ar[d]_{f} \ar@{-->}[rr]^{\psi}         &                            &     X^{\prime\prime} \ar[d]^{\alpha}\\
                  X                       &                                                  &   \tilde{X} \ar[ll]^{\phi}    \\
}
\end{equation}
and the corresponding for $D$,
\begin{equation}\label{construction-diagram2}
\xymatrix{
D^{\prime} \ar[rr]^{\psi}\ar[dr]_{f} & & D^{\prime\prime}=\tilde{D} \ar[dl]^{\phi} \\
                                     & D &
}
\end{equation}
Here we should remark that from the construction of diagram~(\ref{construction-diagram1}) it follows that $(\psi^{\prime})_{\ast}^{-1}D^{\prime} \cong D^{\prime}$ and hence $\psi \colon D^{\prime} \la D^{\prime\prime}$ is just $\psi^{\prime\prime} \colon (\psi^{\prime})_{\ast}^{-1}D^{\prime} \la D^{\prime\prime}$.

From Theorem~\ref{2}, it follows that
\begin{equation}\label{eq-3.1}
f^{\ast}T^1(X)=\mathcal{O}_{D^{\prime}}(D^{\prime})\otimes \varepsilon^{\ast}_1   \mathcal{O}_{D^{\prime}}(D^{\prime}) \otimes \mathcal{O}_{X^{\prime}}(4E_1^{\prime}) \otimes \mathcal{O}_{X^{\prime}}(3E_2^{\prime})
\otimes \mathcal{O}_{D^{\prime}}
\end{equation}
where $\varepsilon_1$ is the unique involution of $D^{\prime}$ interchanging the fibers of $g$. Next we reduce the previous formula to $D^{\prime\prime}=\tilde{D}$. First we get a formula in $X^{\prime\prime}$. It is not hard to see that the involution $\varepsilon_1$ induces an involution $\varepsilon_2$ of $D^{\prime\prime}$ over $\bar{D}$ that fits in a commutative diagram
\[
\xymatrix{
D^{\prime} \ar[r]^{\psi}\ar[d]_{\varepsilon_1} & D^{\prime\prime}\ar[d]^{\varepsilon_2} \\
D^{\prime} \ar[r]^{\psi} & D^{\prime\prime}
}
\]

From diagram~(\ref{construction-diagram1}) and standard adjunctions we get that
\begin{gather*}
\psi^{\ast}\mathcal{O}_{D^{\prime\prime}}(D^{\prime\prime})=\mathcal{O}_{D^{\prime}}(D^{\prime})\otimes \mathcal{O}_W(-B) \otimes \mathcal{O}_{D^{\prime}}\\
\psi^{\ast} ( \mathcal{O}_{X^{\prime\prime}}(E_2^{\prime\prime})\otimes \mathcal{O}_{D^{\prime\prime}})=( \mathcal{O}_{X^{\prime}}(E_2^{\prime})\otimes \mathcal{O}_{D^{\prime}})\otimes \mathcal{O}_W(2B) \otimes \mathcal{O}_{D^{\prime}}\\
\psi^{\ast} ( \mathcal{O}_{X^{\prime\prime}}(E_1^{\prime\prime})\otimes \mathcal{O}_{D^{\prime\prime}})=( \mathcal{O}_{X^{\prime}}(E_1^{\prime})\otimes \mathcal{O}_{D^{\prime}})\otimes \mathcal{O}_W(-B) \otimes \mathcal{O}_{D^{\prime}}\\
\end{gather*}
Hence
\[
(\phi \circ \alpha)^{\ast}T^1(X)=\mathcal{O}_{D^{\prime\prime}}(D^{\prime\prime}) \otimes \varepsilon_2^{\ast} \mathcal{O}_{D^{\prime\prime}}(D^{\prime\prime}) \otimes \mathcal{O}_{X^{\prime\prime}}(4E_1^{\prime\prime}) \otimes \mathcal{O}_{X^{\prime\prime}}(3E_2^{\prime\prime})
\otimes \mathcal{O}_{D^{\prime}}
\]
and so the formula is unchanged in $X^{\prime\prime}$ (we did not expect a change since $X^{\prime}$ and $X^{\prime\prime}$ are isomorphic in codimension 1). Moreover, a carefull look at the construction of $X^{\prime\prime}$ reveals that $E_1^{\prime\prime} \cap D^{\prime\prime} = \emptyset$. Hence
\[
(\phi \circ \alpha)^{\ast}T^1(X)=\mathcal{O}_{D^{\prime\prime}}(D^{\prime\prime}) \otimes \varepsilon_2^{\ast} \mathcal{O}_{D^{\prime\prime}}(D^{\prime\prime}) \otimes \mathcal{O}_{X^{\prime\prime}}(3E_2^{\prime\prime})
\otimes \mathcal{O}_{D^{\prime}}
\]
and therefore
\[
\phi^{\ast}T^1(X)=\mathcal{O}_{\tilde{D}}(\tilde{D}) \otimes \varepsilon^{\ast} \mathcal{O}_{\tilde{D}}(\tilde{D}) \otimes  \mathcal{O}_{\tilde{X}}(3E)
\otimes \mathcal{O}_{\tilde{D}}
\]
where $E=\alpha_{\ast}E_2^{\prime\prime}$ and $\varepsilon=\varepsilon_2$ is the unique nontrivial involution of $\tilde{D}$ over $\bar{D}$, as claimed in the statement.

\textbf{Step 2.} Let $\psi \colon (\hat{X},\hat{D}) \la (X,D)$ be a morphism such that $(\hat{X},1/2\hat{D})$ is terminal and  $K_{\hat{X}}+1/2\hat{D}$, $K_{\hat{D}}$ are $\psi$-nef. We will show that $\psi^{\ast} T^1(X)$ is given by the formula stated in the theorem.

Since $(\hat{X},1/2\hat{D})$ is terminal and nef over $X$, it follows that there is a birational map
\[
g \colon \tilde{X} \dasharrow \hat{X}
\]
which is an isomorphism in codimension 1. Moreover, since $K_{\hat{D}}$ and $K_{\tilde{D}}$ are nef over $D$, $g$ induces an isomorphism between $\tilde{D}$ and $\hat{D}$. Since we already know that $\tilde{D} \la \bar{D}$ is \'etale, $\tilde{Z}$ does not contain any $\phi$-exceptional curves. Therefore $\hat{Z}$ does not contain any $\psi$-exceptional curves and hence there is a commutative diagram
\[
\xymatrix{
\mathrm{Ref}(\tilde{X}) \ar[r]^{g_{\ast}}\ar[d] & \mathrm{Ref}(\hat{X}) \ar[d] \\
\mathrm{Pic}(\tilde{D}) \ar[r]^{g_{\ast}} & \mathrm{Pic}(\hat{D})
}
\]
where $\mathrm{Ref}(\tilde{X})$, $\mathrm{Ref}(\hat{X})$ are the groups of rank 1 reflexive sheaves on $\tilde{X}$ and $\hat{X}$, respectively. Moreover, the horizontal maps are isomorphisms and the vertical the restrictions (the restriction maps are well defined because $\tilde{X}$, $\hat{X}$ are terminal and $\tilde{D}$, $\hat{D}$ smooth).

Since $g$ is an isomorphism in codimension 1, $\phi$ and $\psi$ have the same exceptional divisors. Hence if $\tilde{E}$ is $\phi$-exceptional dominating the locus of points of multiplicity at least 3, $\hat{E}=g_{\ast}^{-1} \tilde{E}$ is $\psi$-exceptional dominating the set of points of multiplicity at least 3 and $g_{\ast}\mathcal{O}_{\tilde{X}}(\tilde{E})=\mathcal{O}_{\hat{X}}(\hat{E})$. Moreover, $g_{\ast}\mathcal{O}_{\tilde{X}}(\tilde{D})=\mathcal{O}_{\hat{X}}(\hat{D})$. Hence
\begin{gather*}
\psi^{\ast}T^1(X)=g_{\ast}\phi^{\ast}T^1(X)=g_{\ast}(\mathcal{O}_{\tilde{D}}(\tilde{D})  \otimes   \varepsilon^{\ast}\mathcal{O}_{\tilde{D}}(\tilde{D})  \otimes \mathcal{O}_{\tilde{X}}(3\tilde{E})
\otimes \mathcal{O}_{\tilde{D}})=\\
\mathcal{O}_{\hat{D}}(\hat{D}) \otimes \varepsilon^{\ast}\mathcal{O}_{\hat{D}}(\hat{D}) \otimes \mathcal{O}_{\hat{X}}(3\hat{E})
\otimes \mathcal{O}_{\hat{D}}
\end{gather*}
as claimed.
\end{proof}

\end{document}